\newtheorem{theorem}{Theorem}[section]
\DeclareSymbolFont{cyrletters}{OT2}{wncyr}{m}{n}\DeclareMathSymbol{\Sha}{\mathalpha}{cyrletters}{"58}
\newcommand{\Fil}{\operatorname{Fil}}
\newcommand{\QQ}{\mathbb{Q}}
\newcommand{\cO}{\mathcal{O}}
\newcommand{\HIw}{H^1_{\mathrm{Iw}}}
\newcommand{\Dcris}{\mathbb{D}_{\rm cris}}
\newcommand{\Mlog}{M_{\log}}
\newcommand{\cyc}{\mathrm{cyc}}
\newcommand{\ur}{\mathrm{ur}}
\renewcommand{\Col}{\mathrm{Col}}
\newcommand{\uCol}{\underline{\mathrm{Col}}}
\renewcommand{\phi}{{\varphi}}
\newcommand{\Qp}{\mathbf{Q}_p}
\newcommand{\Zp}{\mathbf{Z}_p}
\newcommand{\ZZ}{\mathbf{Z}}
\renewcommand{\leq}{\leqslant}
\newcommand{\moon}{\bullet}
\newcommand{\links}{\left(\begin{array}{cc}}
	\newcommand{\rechts}{\end{array}\right)}
\newcommand{\bai}{\left[\begin{array}{cc}}
	\newcommand{\dai}{\end{array}\right]}
\newcommand{\hidari}{\left(\begin{array}{c}}
	\newcommand{\migi}{\end{array}\right)}
\newcommand{\Q}{\mathbb{Q}}
\newcommand{\Z}{\mathbb{Z}}
\newcommand{\ga}{{\mathfrak a}}
\newcommand{\p}{\mathfrak{p}}
\newcommand{\q}{\mathfrak{q}}
\newcommand{\calH}{\mathcal{H}}
\newcommand{\X}{{\mathcal X}}
\newcommand{\Y}{{\mathcal Y}}
\DeclareMathOperator{\rk}{rk}
\newcommand{\Gal}{\operatorname{Gal}}
\newcommand{\rank}{\operatorname{rank}}
\newcommand{\coker}{\operatorname{coker}}
\newcommand{\ord}{\operatorname{ord}}
\newcommand{\loc}{\operatorname{loc}}
\newcommand{\Sel}{\operatorname{Sel}}
\newcommand{\Tr}{\operatorname{Tr}}
\renewcommand{\O}{\mathcal{O}}
\newcommand{\directsum}{\oplus} 
\newcommand{\Union}{\bigcup} 
\newtheorem{auxiliary proposition}[theorem]{Auxiliary Proposition}
\newtheorem{corollary}[theorem]{Corollary}
\newtheorem{lemma}[theorem]{Lemma}
\newtheorem{main conjecture}[theorem]{Main Conjecture}
\newtheorem{main theorem}[theorem]{Main Theorem}
\newtheorem{modesty proposition}[theorem]{Modesty Proposition}
\newtheorem{open problem}[theorem]{Open Problem}
\newtheorem{proposition}[theorem]{Proposition}
\newtheorem{remark}[theorem]{Remark}
\newtheorem{convergence lemma}[theorem]{Convergence Lemma}
\newtheorem{corrected lemma}[theorem]{Corrected Lemma}
\newtheorem{growth lemma}[theorem]{Growth Lemma}
\newtheorem{coefficient lemma}[theorem]{Integrality Lemma}
\newtheorem{interpolation lemma}[theorem]{Interpolation Lemma}
\newtheorem{kernel lemma}[theorem]{Kernel Lemma}
\newtheorem{limit lemma}[theorem]{Limit Lemma}
\newtheorem{tandem lemma}[theorem]{Modesty Lemma}
\newtheorem{zero-finding lemma}[theorem]{Zero-Finding Lemma}
\renewcommand{\Y}{\mathcal{Y}}
\newcommand{\LL}{\mathcal{L}}
\newcommand{\vp}{\varphi}
\renewcommand{\cyc}{\mathrm{cyc}}
\renewcommand{\ur}{\mathrm{ur}}
\renewcommand{\P}{\mathfrak{P}}
\definecolor{Green}{rgb}{0.0, 0.5, 0.0}
\begin{document}

\title{Ranks of elliptic curves over $\Zp^2$-extensions}

\begin{abstract}
Let $E$ be an elliptic curve with good reduction at a fixed  odd prime $p$ and $K$ an imaginary quadratic field where $p$ splits. We give a growth estimate for the Mordell-Weil rank of $E$  over finite extensions inside the $\Z_p^2$-extension of $K$.
\end{abstract}

\author{Antonio Lei}
\address{Antonio Lei\newline
D\'epartement de math\'ematiques et de statistique\\
Universit\'e Laval, Pavillon Alexandre-Vachon\\
1045 Avenue de la M\'edecine\\
Qu\'ebec, QC\\
Canada G1V 0A6}
\email{antonio.lei@mat.ulaval.ca}

\author{Florian Sprung}
\address{Florian Sprung\newline
 School of Mathematical and Statistical Sciences\\
Arizona State University\\
Tempe, AZ 85287-1804\\ USA}
\email{florian.sprung@asu.edu}

\maketitle

\section{Introduction}
One fundamental question in the arithmetic of elliptic curves is to determine the Mordell-Weil rank of its rational points in a number field. While this question is hard, one can hope to answer a similar question in $p$-adic families of number fields: {\it How does the Mordell-Weil rank grow in the finite layers of the maximal $p$-unramified extension of a given number field $F$?}

When $F$ is abelian over $\Q$, works of Kato \cite{Ka} and Rohrlich \cite{rohrlich} show that the rank is bounded in the the cyclotomic $\Z_p$-extension, i.e. the $\Z_p$-extension contained in $\Union_nF(\zeta_{p^n})$. Thus, an answer to the question is "{\it It stays bounded}" when $F=\Q$.

The purpose of this article is to shed some light on the next simplest case, when $F$ is an imaginary quadratic field $K$. 
Such a $K$ can have $\Z_p$-extensions in which the Mordell-Weil rank may be unbounded. In fact, Mazur formulated a precise conjecture in the case of ordinary primes $p$, the \cite[Growth Number Conjecture]{mazuricm}. The Mordell-Weil rank at the $n$th layer of a $\Z_p$-extension of $K$ is known to be of the form $ap^n+\O(1)$, i.e. is governed mainly by an integer $a$, the growth number. The conjecture gives a recipe to choose $a\in\{0,1,2\}$ and predicts that $a=0$ unless the $\Z_p$-extension one is scrutinizing is the \textit{anticyclotomic} one. (The theorem of Kato--Rohrlich above is an instance for $a=0$.) In work towards this conjecture, the theorems of Cornut and Vatsal \cite{Cornut,Vatsal} combined with Bertolini's thesis \cite{bertolini} give scenarios in which $a=1$ along anticyclotomic $\Z_p$-extensions, so that in particular the Mordell-Weil rank is not bounded. See also Bertolini \cite{Be} for a more precise growth pattern of the rank. For a counterpart to Bertolini's result in the $a_p=0$ case, see \cite{longovigni}. In the case where $E$ has complex multiplication by $K$, we remark that the growth patterns for the ordinary and the supersingular case, as stated in \cite[discussion after Theorem 1.8]{Gr}, are different.

In this article, we develop methods to bound the Mordell-Weil rank in {\it all} subfields contained in the maximal $p$-unframified extension of $K$. The main innovation is that we present methods for both the ordinary and the supersingular case. 

\begin{theorem}
	Let $E$ be an elliptic curve over $\Q$, $K$ an imaginary field along which an odd prime $p$ coprime to the conductor of $E$ splits. Let $K_n$ be the subfield of the  $\Z_p^2$-extension of $K$ so that $\Gal(K_n/K)\cong(\Z/p^n\Z)^2$. Then $\rank E(K_n)=O(p^n)$.
\end{theorem}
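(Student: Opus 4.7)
Write $\Gamma = \Gal(K_\infty/K) \cong \Z_p^2$, $\Lambda = \Z_p[[\Gamma]] \cong \Z_p[[X,Y]]$, $\Gamma_n = \Gal(K_\infty/K_n)$, and let $\omega_n \subset \Lambda$ be the augmentation ideal of $\Gamma_n$, so that $\Lambda/\omega_n \cong \Z_p[\Gal(K_n/K)]$ and, after a choice of topological generators of $\Gamma$, $\omega_n = ((1+X)^{p^n}-1,\,(1+Y)^{p^n}-1)$. The plan is a three-step reduction: first pass from Mordell--Weil ranks to Selmer coranks at finite level, then to a descent of a $\Lambda$-torsion Iwasawa module, and finally a purely $\Lambda$-algebraic bound.

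\textbf{Step 1.} The defining sequence
\[ 0 \to E(K_n)\otimes\Qp/\Zp \to \Sel_{p^\infty}(E/K_n) \to \Sha(E/K_n)[p^\infty] \to 0 \]
gives $\rank E(K_n) \leq \corank_{\Z_p}\Sel_{p^\infty}(E/K_n)$, so it suffices to bound the Selmer corank at level $n$.

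\textbf{Step 2.} The prime $p$ splits in $K$ as $\mathfrak{p}\bar{\mathfrak{p}}$. In the ordinary case I would work with the Greenberg Selmer group over $K_\infty$, whose Pontryagin dual $X_\infty$ is a finitely generated torsion $\Lambda$-module (from Kato's Euler system combined with descent for the $\Z_p^2$-extension). In the supersingular case the classical Pontryagin dual $\Sel_{p^\infty}(E/K_\infty)^\vee$ is not $\Lambda$-torsion, so I would modify the local conditions at $\mathfrak{p}$ and $\bar{\mathfrak{p}}$ to the appropriate signed conditions, in the spirit of Kobayashi's $\pm$-theory extended to the $\Z_p^2$-setting, producing for each $\vec{s}\in\{+,-\}^2$ a signed Selmer group $\Sel^{\vec{s}}_{p^\infty}(E/K_\infty)$ whose Pontryagin dual $X_\infty^{\vec{s}}$ is torsion over $\Lambda$; crucially, at the finite layer $K_n$ these signed conditions coincide with the classical ones. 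A control theorem with bounded error---Mazur's in the ordinary case and its signed analogue in the supersingular case---then yields
\[ \corank_{\Z_p}\Sel_{p^\infty}(E/K_n) \leq \rank_{\Z_p}\bigl(X/\omega_n X\bigr) + O(1), \]
where $X$ denotes the appropriate torsion Iwasawa module and the $O(1)$ is uniform in $n$.

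\textbf{Step 3.} It remains to show $\rank_{\Z_p}(X/\omega_n X) = O(p^n)$ for any finitely generated torsion $\Lambda$-module $X$. The structure theorem over $\Lambda$ provides a pseudo-isomorphism from $X$ to a finite direct sum $\bigoplus_i \Lambda/(f_i^{r_i})$ with $f_i\in\Lambda$ nonzero; any pseudo-null $\Lambda$-module is finitely generated of bounded $\Z_p$-rank, so the pseudo-null correction contributes only $O(1)$ on descent. It therefore suffices to bound $\rank_{\Z_p}\bigl(\Lambda/(f,\omega_n)\bigr)$ for a single nonzero $f$. Possibly after swapping $X$ and $Y$, write $f = \sum_{i=0}^d a_i(Y)X^i$ with $a_d$ a nonzero power series, equal up to a unit to a distinguished polynomial of some degree $e$. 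Extending scalars to $\overline{\Qp}$, the ring $\overline{\Qp}\otimes_{\Z_p}\Lambda/(f,\omega_n)$ decomposes as a finite product of copies of $\overline{\Qp}$ indexed by pairs $(\zeta,\eta)$ of $p^n$-th roots of unity satisfying $f(\zeta-1,\eta-1)=0$. For $\eta$ with $a_d(\eta-1)\neq 0$ the equation $f(X,\eta-1)=0$ has at most $d$ roots, contributing at most $d\cdot p^n$ pairs. The at most $e$ remaining $\eta$ each contribute at most $p^n$ values of $\zeta$ trivially, giving a total bound $(d+e)p^n = O(p^n)$.

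The principal obstacle is Step 2: producing the correct $\Lambda$-torsion Selmer-type module over $K_\infty$ together with a matching control theorem of bounded error in the supersingular case at the two split primes requires carefully calibrating the pair of sign choices at $\mathfrak{p}$ and $\bar{\mathfrak{p}}$ and analyzing the interaction of the corresponding local conditions as one descends to each finite layer $K_n$.
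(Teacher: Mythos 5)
Your Steps 1 and 3 are sound, and Step 2 correctly captures the ordinary case (via Greenberg/Mazur control) and the rough shape of the $a_p=0$ supersingular case (Kim's control theorem). Step 3 is in effect an elementary proof of the torsion case of the result of Harris (\cite[Theorem~1.10]{harris}) that the paper invokes, which is fine.

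There are two genuine gaps. The first is localized but fixable: your claim that ``at the finite layer $K_n$ these signed conditions coincide with the classical ones'' is false. In Kobayashi/Kim $\pm$-theory the signed local condition $\hat{E}^\pm((K_n)_\P)$ is a \emph{proper} subgroup of $\hat{E}((K_n)_\P)$ for $n\ge1$; what is true is that $\hat{E}^+ + \hat{E}^-$ recovers $\hat{E}$ up to finite index, so each signed Selmer group sits \emph{inside} the classical one. To pass from the classical Selmer corank to the signed ones you need the four-term decomposition argument (as in \cite[Prop.~10.1]{kobayashi03} and Proposition~\ref{prop:kimexact} of the paper) giving $\rank_{\Zp}\Sel_p(E/K_n)^\vee\le\sum_{\circ,\moon}\rank_{\Zp}\Sel^{\circ\moon}_p(E/K_n)^\vee + O(1)$, and only then apply the control theorem to each summand. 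The direction of the inequality in your Step 2 cannot come from the control theorem alone.

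The second gap is structural: your proposal does not handle the supersingular case $a_p=\pm p$. Kobayashi's $\pm$ local conditions, and hence Kim's control theorem, exist only when $a_p=0$. For $a_p\neq0$ one must replace $\pm$ by the $\sharp/\flat$-Coleman maps of Sprung and Lei--Loeffler--Zerbes, and then two obstacles appear that your plan treats as mere ``calibration'': (i) no analogue of the four-term decomposition (Proposition~\ref{prop:kimexact}) is known for $\sharp/\flat$-Selmer groups, and (ii) no control theorem is known for them either. The paper therefore does not bound $\Sel_p(E/K_n)$ through a controlled torsion module at all; instead it controls only the \emph{strict} (fine) Selmer group dual $\X_n^0$, bounds the kernel $\Y_n$ of $\X_n\to\X_n^0$ by the Iwasawa-cohomological variant $\Y_n'$, and then estimates $\rank_{\Zp}\Y_n'$ directly by translating the vanishing of local cohomology classes into zeros of explicit power series via the logarithm matrix and Coleman maps, and counting those zeros by a Cuoco--Monsky valuation analysis. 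That whole circle of ideas has no counterpart in your plan. Moreover, even the $\Lambda$-torsionness of the $\sharp/\flat$-Selmer duals in the $a_p\neq0$ case is not known unconditionally; the paper takes it as a hypothesis \textbf{(H.tor)} tied to non-vanishing of a two-variable $p$-adic $L$-function, a point your proposal also passes over.
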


We may thus speak of a ``$2$-dimensional growth number'' that governs the rank of $E(K_n)$ as one climbs along the two-dimensional tower of the $\Z_p^2$-extension of $K$. 
One strategy for proving a theorem like this is to employ a \textit{Control Theorem}. The idea of controlling, going back to Mazur, is the following: The $p^\infty$-Selmer group $\Sel_p(E/K_n)$ at finite level 
should be controlled by easier Selmer groups at infinite level $K_\infty$. We succeed in following this strategy in the cases $p \nmid a_p$ (i.e. $p$ is ordinary) or $a_p=0$ (which is a subcase of the supersingular case $p \mid a_p$) by applying control theorems by 
Greenberg ($p \nmid a_p$) and Kim ($a_p=0$), which generalize theorems of Mazur and Kobayashi concerning the cyclotomic $\Z_p$-extension, and combine them with a general growth lemma of Harris.

However, when $p\mid a_p\neq0$, there is no general control theorem available, so we combine partial controlling with a different strategy. Recall that the idea behind the control theorem was to control the Selmer group at finite level via Selmer groups at the infinite level. Present technology does not allow us to do this, but we are able to control a quotient $\X_n^0$, the \textit{fine} Selmer group (more precisely its dual), of the Selmer group dual $\X_n$. We deduce from this control theorem that the $\Z_p$-rank of $\X_n^0$ is $O(p^n)$. We thus have reduced the problem to showing that the $\Z_p$-rank of the kernel $\Y_n$ of the quotient map $\X_n\rightarrow \X_n^0$ is also $O(p^n)$.

The idea of shedding light on the Selmer group dual $\X_n$ via this kernel $\Y_n$ goes back to at least Kobayashi's methods of estimating the growth of $\Sha$ in cyclotomic towers. One of Kobayashi's key insights was to analyze $\Y_n$ via a similar but slightly more global Selmer-type group $\Y_n'$: While the defining condition for $\Y_n$ comes from the local points of $E$ inside the completions of $K_n$, $\Y_n'$ incorporates global information from the infinite level as well. (This extra global information is the $\Gal(K_n/K)$-coinvariants of the Iwasawa cohomology at $K_\infty$). From the point of view of $\Z_p$-ranks, $\Y_n'$  controls $\Y_n$ so that $\rk_{\Z_p}\Y_n\leq \rk_{\Z_p}\Y_n'$: Thus, we can control $\rk_{\Z_p}\X_n$, and thus the Mordell-Weil rank at level $n$ \textit{if we could handle $\rk_{\Z_p}\Y_n'$ directly}. What is new in this work is that we handle $\rk_{\Z_p}\Y_n'$ by relating it to Coleman maps that allows us to circumvent control theorems, and despite their absence show that $\rk_{\Z_p}\Y_n'=O(p^n)$.

To do this, we analyze the differences $\rk_{\Z_p}\Y_{n+1}'-\rk_{\Z_p}\Y_n'$ by relating them to the $\Z_p$-ranks of the local modules that contain the defining conditions of $\Y_n'$. We can break these local modules into (isotypical) components by a lemma of Cuoco and Monsky,  and find that the rank of each component is $c(p^{n+1}-p^n)$, where the possibilities for $c$ are $c=0,1,$ or $2$. The crucial step in this paper is to show that among these isotypical components in question, $c=1$ or $c=2$ happens only finitely many times, in fact a number of times that is \textit{bounded independently of $n$}. Thus, the jump in ranks sums up to at most $C(p^{n+1}-p^n)$ for a constant \footnote{an upper bound for the ``growth number''} $C$ independent of $n$, giving the estimate $\rk_{\Z_p}\Y_n'\leq Cp^n$. 

We accomplish this crucial step by showing that the localized Iwasawa cohomology \textit{vanishes} -- i.e. lies inside the Selmer condition -- if and only if it connects to a zero of a $p$-adic power series via pairs of Coleman maps $\Col_\sharp$ and $\Col_\flat$ and a certain two by two matrix (the logarithm matrix). The localized Iwasawa cohomology vanishes when $c\neq0$, while there can only be finitely many zeros of the power series it connects to. Therefore, we can have $c\neq0$ only a finite number of times.

\textbf{Outlook.} The `growth number conjecture'  \cite[Section 18]{mazuricm} is one instance of the philosophy of Greenberg, Mazur, and Rubin that growth in the Mordell-Weil rank should come from zeros forced by functional equations (cf. \cite{MR}), but concerns the case of ordinary reduction. We  write down a version that includes supersingular primes as well:

\newtheorem{openproblem}[theorem]{Growth Number Vanishing  Conjecture}
\begin{openproblem} Let $E$ be an elliptic curve with good reduction at a prime $p$. The Mordell-Weil rank stays bounded along any $\Z_p$-extension of $K$, unless the extension is anticyclotomic and the root number negative.  \end{openproblem}

\subsection*{Acknowledgment} The authors would like to thank Ralph Greenberg, Meng Fai Lim, Bharath Palvannan, and Karl Rubin for interesting discussions and helpful email correspondence during the preparation of this article.

\section{Setup and notation}
Throughout this article, $p$ is a fixed odd prime, $E$ is an elliptic curve defined over $\Q$ with good reduction at $p$ and the Tate module of $E$ at $p$ will be denoted by $T$. We also fix  an imaginary quadratic field $K$ over which $p$ splits, i.e.  $(p)=\p\q$ with $\p\neq\q$. We fix embeddings $\overline{\mathbf{Q}}\hookrightarrow \mathbf{C}$ and $\overline{\mathbf{Q}}\hookrightarrow \mathbf{C}_p$ so that $\p$ lands inside the maximal ideal of $\cO_{\mathbf{C}_p}$.

 Given an ideal $\ga$ in the ring of integers of $K$, we denote by $K(\ga)$ its ray class field.
Let $K_\infty$ be the $\Zp^2$-extension of $\Zp$ and write $\Gamma=\Gal(K_\infty/K)$.  If $n\ge0$ is an integer, we write $\Gamma_n=\Gamma^{p^n}$ and $$K_n=K_{\infty}^{\Gamma_n}=K(p^{n+1})^{\Gal(K(1)/K)}.$$  Note that $\Gamma\cong G_\p\times G_\q$, where $G_\star$ denotes the Galois group of the extension $\Gal(K(\star^\infty)\cap K_\infty/K)$. We fix topological generators $\gamma_\p$ and $\gamma_\q$ for these groups. The Iwasawa algebra $\Lambda:=\Zp[[\Gamma]]$ may be identified with the power series ring $\Zp[[\gamma_\p-1,\gamma_\q-1]]$. 
Given a $\Lambda$-module $M$, we shall write $M^\vee$ for its Pontryagin dual. We will employ the following theorem a number of times throughout the article.

\begin{theorem}\label{thm:harris}
Let $M$ be a finitely generated $\Lambda$-module of rank $r$. Then, 
\[
\rank_{\Zp}M_{\Gamma_n}=rp^{2n}+O(p^n).
\]
\end{theorem}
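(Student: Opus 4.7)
The plan is to invoke the structure theory of finitely generated modules over $\Lambda \cong \Zp\lb T_\p, T_\q\rb$ (with $T_\star = \gamma_\star - 1$), a regular local ring of Krull dimension three. Up to a $\Lambda$-linear map whose kernel and cokernel are pseudo-null (i.e. supported in codimension $\geq 2$), one writes
\[
M \sim \Lambda^r \oplus \bigoplus_{i=1}^{s} \Lambda/\gp_i^{a_i},
\]
with the $\gp_i$ height-one primes of $\Lambda$. Because $(-)_{\Gamma_n}$ is right exact, the assertion reduces to estimating $\rank_{\Zp}$ of the coinvariants of each summand plus any pseudo-null error.

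For the free part, $(\Lambda^r)_{\Gamma_n} = \Zp[\Gamma/\Gamma_n]^{r}$ has $\Zp$-rank exactly $r \cdot |\Gamma/\Gamma_n| = r p^{2n}$, supplying the leading term. For a torsion summand $\Lambda/\gp^a$ I argue by cases: when $\gp = (p)$ the module is $p$-torsion and its coinvariants have $\Zp$-rank zero, while otherwise $\gp = (f)$ for an irreducible $f \notin (p)$. After choosing a basis of $\Gamma$ adapted to $f$, Weierstrass preparation writes $f = u \cdot g$ with $u \in \Lambda^\times$ and $g$ a distinguished polynomial in $T_\p$ of some degree $d$, so that $\Lambda/(f)$ is free of rank $d$ over $\Zp\lb T_\q\rb$. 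Killing first $(1+T_\q)^{p^n} - 1$ produces a module of $\Zp$-rank $d p^n$, and the further quotient by $(1+T_\p)^{p^n} - 1$ can only diminish the rank; the same bound persists for $\Lambda/\gp^a$, giving $\rank_{\Zp}(\Lambda/\gp^a)_{\Gamma_n} = O(p^n)$.

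It remains to bound the pseudo-null contribution. A pseudo-null $\Lambda$-module $N$ admits a finite filtration whose successive quotients have the form $\Lambda/\gq$ with $\gq$ a prime of height $\geq 2$. If $p \in \gq$ then $\Lambda/\gq$ is a quotient of $\Fp\lb T_\p, T_\q\rb$ and has $\Zp$-rank zero; otherwise $\Lambda/\gq$ is a one-dimensional complete Noetherian local domain in which $p$ is nonzero, hence module-finite over $\Zp$ and of finite $\Zp$-rank. Thus $N$ has finite $\Zp$-rank, and the surjection $N \twoheadrightarrow N_{\Gamma_n}$ gives $\rank_{\Zp} N_{\Gamma_n} = O(1)$. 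Assembling the three contributions via the right-exact coinvariant sequences attached to the pseudo-isomorphism yields
\[
\rank_{\Zp} M_{\Gamma_n} = r p^{2n} + O(p^n),
\]
as required.

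The main technical obstacle will be the Weierstrass preparation step for the torsion part: for each irreducible $f$ one must find an automorphism of $\Gamma \cong \Zp^2$ making $f$ distinguished in one of the variables, and then verify that the $O(p^n)$ estimate remains uniform across the finitely many summands $\Lambda/\gp_i^{a_i}$. The pseudo-null error is comparatively benign since it only contributes $O(1)$.
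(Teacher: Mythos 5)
Your argument is correct, but it proves the theorem from scratch, whereas the paper simply cites \cite[Theorem~1.10]{harris}, which establishes this estimate for arbitrary $\Zp^d$-extensions. Your route via the structure theorem, Weierstrass preparation after a change of topological generators of $\Gamma$, and a separate analysis of the pseudo-null error is essentially a self-contained two-variable version of Harris's argument. The lemma you flag---that an irreducible $f\notin(p)$ can be made regular in $T_\p$ by an automorphism of $\Gamma$---is genuinely needed; it follows because the elements $(1+T_\p)^b(1+T_\q)^d-1$, as $(b:d)$ ranges over $\PP^1(\Zp)$, are pairwise non-associate mod $p$, so $\bar f$ is divisible by only finitely many of them and one may move an unused ``direction'' onto the variable $T_\q$. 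This sits alongside the Cuoco--Monsky circle of ideas the paper already invokes in Lemma~\ref{lem:quotientranks}. When you ``assemble,'' note also that to get the lower bound $\rank_{\Zp}M_{\Gamma_n}\ge rp^{2n}-O(1)$ you should split the pseudo-isomorphism into two short exact sequences and use that the $\Gamma_n$-homology of a pseudo-null module has $\Zp$-rank $O(1)$ (e.g.\ via the Koszul complex), since right-exactness of coinvariants alone only yields the upper bound cleanly. Your approach buys a self-contained proof; the paper's citation buys brevity and the generality of $\Zp^d$.
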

\begin{proof}
This is a special case of \cite[Theorem~1.10]{harris}.
\end{proof}

We assume throughout that there are only finitely many primes of $K_\infty$ lying over $p$. Let $p^t$ be the number of primes above $\p$ and $\q$.  We fix a choice of coset  representatives $\gamma_1,\ldots,\gamma_{p^t}$ and $\delta_1,\ldots,\delta_{p^t}$ for $\Gamma/\Gamma_\p$ and $\Gamma/\Gamma_\q$ respectively, where $\Gamma_\star$ denotes the decomposition group of $\star$ in $\Gamma$.

For an integer $n\ge 1$, we define
\[
\omega_n(X):=(1+X)^{p^n}-1 \text{, and the cyclotomic polynomial }\Phi_n(X):=\frac{\omega_n(X)}{\omega_{n-1}(X)}.
\]
We also define  $$\omega_n^+(X):=X\prod_{1\leq i \leq n, i \text{ even}} \Phi_{i}(X), \quad
\omega_n^-(X):=X\prod_{1\leq i  \leq n, i \text { odd}} \Phi_{i}(X).$$

\section{The ordinary case}
In this section, we assume that $E$ has good ordinary reduction at $p$ and we study the Mordell-Weil ranks of $E$ over $K_{n}$ as $n$ varies. The result is probably well known amongst experts. Nonetheless, we present the proof here to illustrate the divergence of the techniques employed in the supersingular case from the ordinary case.

Given an algebraic extension $F$ of $\QQ$,   the $p$-Selmer group of $E$ over $F$ is defined to be
\[
\Sel_p(E/F)=\ker\left(H^1(F,E[p^\infty])\rightarrow\prod_v\frac{H^1(F_v,E[p^\infty])}{E(F_v)\otimes \Qp/\Zp}\right),
\]
where $v$ runs through all places of $F$ in the product. The Selmer group over $K_\infty$ satisfies the following control theorem.
\begin{theorem}\label{thm:Greenbergcontrol}
Let $E/\QQ$ be an elliptic curve with good ordinary reduction at $p$. For an integer $n\ge$, let  $s_n$ denote the restriction map
\[
\Sel_p(E/K_{n})\rightarrow \Sel_p(E/K_{\infty})^{\Gamma_n}.
\]
Then, both $\ker(s_n)$ and $\coker(s_n)$ are finite.
\end{theorem}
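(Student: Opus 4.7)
The plan is to follow the standard Mazur--Greenberg control-theorem template, adapted to the $\Z_p^2$-extension $K_\infty/K$: compare the defining sequence of $\Sel_p(E/K_n)$ with the $\Gamma_n$-invariants of the analogous sequence at $K_\infty$ via the snake lemma, and bound the resulting global and local kernels. Concretely, I would write down the commutative diagram
\[
\xymatrix{
0 \ar[r] & \Sel_p(E/K_n) \ar[r] \ar[d]^{s_n} & H^1(K_n, E[p^\infty]) \ar[r] \ar[d]^{h_n} & \prod_v \calJ_v(K_n) \ar[d]^{g_n} \\
0 \ar[r] & \Sel_p(E/K_\infty)^{\Gamma_n} \ar[r] & H^1(K_\infty, E[p^\infty])^{\Gamma_n} \ar[r] & \prod_v \calJ_v(K_\infty)^{\Gamma_n}
}
\]
where $\calJ_v(F) := H^1(F_v, E[p^\infty])/(E(F_v) \otimes \Qp/\Zp)$. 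Snake lemma then reduces the finiteness of $\ker(s_n)$ and $\coker(s_n)$ to finiteness of $\ker(h_n)$, $\ker(g_n)$, and suitable cokernel pieces.

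For the global map, inflation--restriction identifies $\ker(h_n) = H^1(\Gamma_n, E(K_\infty)[p^\infty])$ and bounds the relevant cokernel by $H^2(\Gamma_n, E(K_\infty)[p^\infty])$. Since $E(K_\infty)[p^\infty]$ is finite --- the $p$-power torsion of an elliptic curve cannot grow unboundedly in a $\Z_p^d$-tower of a number field, by a Weil-pairing plus ramification argument --- and since $\Gamma_n \cong \Z_p^2$ has cohomological dimension $2$, both cohomology groups are finite.

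The local analysis is the heart of the argument. The product $\ker(g_n) = \prod_v \ker(g_{n,v})$ is effectively over finitely many primes of $K_n$: only primes of bad reduction for $E$ and primes above $p$ contribute, the latter being finite by the standing assumption of the paper, and archimedean places vanish since $p$ is odd. At a prime $v \nmid p$ of bad reduction, $\ker(g_{n,v})$ is controlled by the finite group $H^1(\Gamma_{n,v}, E(K_{\infty,v})[p^\infty])$ where $\Gamma_{n,v}$ is the corresponding decomposition subgroup, and this is finite with order bounded in terms of $c_v$ and $|E(K_v)[p^\infty]|$.

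The main obstacle is controlling $\ker(g_{n,v})$ at primes $v \mid p$. Here the ordinary hypothesis is crucial: it provides the short exact sequence $0 \to C \to E[p^\infty] \to D \to 0$ of $G_{K_v}$-modules coming from the formal group filtration, with $C$ the kernel of reduction and $D$ the \'etale unramified quotient. Greenberg's reformulation of the local Selmer condition replaces $E(K_{n,v}) \otimes \Qp/\Zp$ with the image of $H^1(K_{n,v}, D)_{\mathrm{ur}}$ up to finite error, so that $\ker(g_{n,v})$ is pinned down by the Galois cohomology of $\Gamma_n$ acting on $D^{G_{K_{\infty,v}}}$ --- and since the latter is the $p$-primary torsion of the reduced curve over a residue field of $K_{\infty,v}$, it too is finite. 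Combining these bounds with the snake lemma yields the desired finiteness of both $\ker(s_n)$ and $\coker(s_n)$.
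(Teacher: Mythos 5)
The paper does not reprove this statement but simply cites Greenberg's control theorem for abelian varieties over $\Zp^d$-extensions (\cite[Theorem 2]{greenberg}); your proposal attempts to reconstruct the underlying argument via the standard Mazur--Greenberg snake-lemma template, which is indeed the right framework, and the overall structure of the diagram, the reduction to $\ker h_n$, $\coker h_n$, and the local kernels at finitely many places, and the treatment of the places $v\nmid p$ are essentially sound.

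There is, however, a genuine gap at the crux of the argument, namely the local analysis at $v\mid p$. You claim that $D^{G_{K_{\infty,v}}}$, identified with $\tilde E(k_\infty)[p^\infty]$ for $k_\infty$ the residue field of $K_{\infty,v}$, is finite because it is ``the $p$-primary torsion of the reduced curve over a residue field of $K_{\infty,v}$.'' This is false in general. Under the paper's standing hypothesis that there are only finitely many primes of $K_\infty$ above $p$, the decomposition group of $v$ is open in $\Gamma$, so $K_{\infty,v}$ is the $\Zp^2$-extension of $K_\p\cong\Qp$, which contains \emph{the} unramified $\Zp$-extension of $\Qp$; hence $k_\infty$ is the $\Zp$-extension of $\Fp$, an infinite field. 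When $p$ is anomalous in Mazur's sense, i.e.\ $a_p\equiv 1\pmod p$ (perfectly compatible with good ordinary reduction), the unit Frobenius eigenvalue $\alpha$ satisfies $\alpha\equiv 1\pmod p$, so $v_p(\alpha^{p^n}-1)=v_p(\alpha-1)+n\to\infty$ and $\tilde E(k_\infty)[p^\infty]\cong\Qp/\Zp$ is infinite. The finiteness of $\ker(g_{n,v})$ is nevertheless true, but the correct argument must exploit the nontrivial Frobenius action on this infinite module (the key point being that $\alpha^{p^n}-1\neq 0$, so the $\Gamma_{n,v}$-invariants and coinvariants are finite), rather than the finiteness of the module itself; as written your step fails in precisely the anomalous ordinary case. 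A secondary, smaller point: the finiteness of $E(K_\infty)[p^\infty]$ is correct but does not obviously follow from ``Weil pairing plus ramification''; for non-CM $E$ it is cleanest to invoke Serre's open image theorem, noting that $\Gal(K_\infty/K)$ is abelian so the image of $G_{K_\infty}$ still contains an open subgroup of $\SL_2(\Zp)$, which stabilizes no line in $T_pE$.
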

\begin{proof}
This is a special case of \cite[Theorem 2]{greenberg}.
\end{proof}

\begin{theorem}\label{thm:ordranks}
Let $E/\QQ$ be an elliptic curve with good ordinary reduction at $p$. Then,
\[
\rank_{\Zp}\Sel_p(E/K_{n})^\vee=O(p^n)
\]
for $n\gg0$.
\end{theorem}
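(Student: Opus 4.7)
The strategy is to pass from the finite-level Selmer group to the global $\Lambda$-module $\X:=\Sel_p(E/K_\infty)^\vee$ via Theorem~\ref{thm:Greenbergcontrol}, and then to apply the growth formula of Theorem~\ref{thm:harris}.

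First, I would Pontryagin-dualize Theorem~\ref{thm:Greenbergcontrol}. Using the identification $(M^{\Gamma_n})^\vee\cong (M^\vee)_{\Gamma_n}$ for a discrete $\Gamma_n$-module $M$, together with the fact that the Pontryagin dual of a finite group is finite and therefore of vanishing $\Zp$-rank, the dual of $s_n$ becomes a map $\X_{\Gamma_n}\to \Sel_p(E/K_n)^\vee$ with finite kernel and cokernel. In particular,
\[
\rank_{\Zp}\Sel_p(E/K_n)^\vee \;=\; \rank_{\Zp}\X_{\Gamma_n}.
\]
Writing $r$ for the $\Lambda$-rank of $\X$, Theorem~\ref{thm:harris} then yields $\rank_{\Zp}\X_{\Gamma_n}=rp^{2n}+O(p^n)$, so the desired bound $O(p^n)$ is equivalent to $r=0$, i.e.\ to $\X$ being $\Lambda$-torsion.

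The main obstacle is to establish this $\Lambda$-torsionness. I would deduce it from Kato's theorem: since $K/\Q$ is abelian, Kato's Euler system, applied to $E$ and to its twist by the nontrivial character of $\Gal(K/\Q)$, shows that the dual Selmer group of $E$ over the cyclotomic $\Zp$-extension $K^{\cyc}$ of $K$ is $\Zp[[\Gal(K^{\cyc}/K)]]$-torsion. A standard two-variable descent in the good ordinary case, available through Greenberg's Iwasawa theory for elliptic curves over $\Zp^2$-extensions, then upgrades cotorsionness on the cyclotomic line to cotorsionness over all of $K_\infty$, forcing $r=0$. Combined with the two rank identities above, this gives the estimate.
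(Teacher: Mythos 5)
Your proposal is correct and follows essentially the same route as the paper: Greenberg's control theorem plus Harris's growth formula reduce everything to showing $\Sel_p(E/K_\infty)^\vee$ is $\Lambda$-torsion, and that torsionness is deduced from Kato's theorem applied to $E$ and its quadratic twist $E^{(K)}$ over $\QQ_\cyc$. The only thing I would sharpen is the "standard two-variable descent" step: the paper makes it precise by invoking the isomorphism $\Sel_p(E/K_\infty)^{H_\cyc}\cong \Sel_p(E/\QQ_\cyc)\oplus \Sel_p(E^{(K)}/\QQ_\cyc)$ from \cite{lp}, and then using the general fact that if the $H_\cyc$-coinvariants of a finitely generated $\Lambda$-module are $\Zp[[\Gamma_\cyc]]$-torsion, then the module itself is $\Lambda$-torsion.
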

\begin{proof}
Let us first show that the dual Selmer group $\Sel_p(E/K_\infty)^\vee$ is torsion over $\Lambda$. Let $K_\cyc/K$ be the cyclotomic $\Zp$-extension of $K$ and write $\Gamma_\cyc$ and $H_\cyc$ for the Galois groups $\Gal(K_\cyc/K)$ and $\Gal(K_\infty /K_\cyc)$ respectively. Note that to show $\Sel_p(E/K_\infty)^\vee$ is $\Lambda$-torsion, it suffices to show that $\left(\Sel_p(E/K_\infty)^{H_\cyc}\right)^\vee$ is. As shown in \cite[proof of Proposition~8.4]{lp}, there is an isomorphism of $\Zp[[\Gamma_\cyc]]$-modules
\[
\Sel_p(E/K_\infty)^{H_\cyc}\cong \Sel_p(E/\QQ_\cyc)\oplus \Sel_p(E^{(K)}/\QQ_\cyc),
\]
where $\QQ_\cyc$ denotes the cyclotomic $\Zp$-extension of $\QQ$ and $E^{(K)}$ denotes the quadratic twist of $E$ by $K$. Both $\Sel_p(E/\QQ_\cyc)^\vee$ and $\Sel_p(E^{(K)}/\QQ_\cyc)^\vee$ are $\Zp[[
\Gamma_\cyc]]$-torsion by \cite[Theorem~17.4]{Ka} since both $E$ and $E^{(K)}$ have good ordinary reduction at $p$. Consequently, $\Sel_p(E/K_\infty)^\vee$ is $\Lambda$-torsion as claimed.

We may now combine Theorems~\ref{thm:harris} and \ref{thm:Greenbergcontrol} to deduce that
\[
\rank_{\Zp}\Sel_p(E/K_n)^\vee=\rank_{\Zp}\left(\Sel_p(E/K_\infty)\right)^\vee_{\Gamma_n}=O(p^n),
\]
hence the result.
\end{proof}


\section{The $a_p=0$ case}\label{S:kim}

Let $\hat E$ denote the formal group attached to  $E$ over $\Z_p$.  Let  $\P$ be a fixed prime of $K(p^\infty)$ lying above $\p$. By an abuse of notation, we denote the prime $L\cap \P$ again by $\P$ whenever $L$ is a sub-extension of $K(p^\infty)/K$.  For non-negative integers $m$ and $n$, Generalizing Kobayashi's work in \cite{kobayashi03}, Kim defined in \cite{kimspaper} the following groups
\begin{align*}
  E^+(K(\p^m\q^n)_\P) & =\left\{P\in \hat E(K(\p^m\q^n)_\P):\Tr_{m/l+1,n}P\in \hat E(K(\p^l\q^n)_\P), 2|l<m\right\};  \\
  E^-(K(\p^m\q^n)_\P) & =\left\{P\in \hat E(K(\p^m\q^n)_\P):\Tr_{m/l+1,n}P\in \hat E(K(\p^l\q^n)_\P), 2\nmid l<m\right\},
\end{align*}
where $\Tr_{m/l+1,n}:\hat E(K(\p^m\q^n)_\P) \rightarrow \hat E(K(\p^{l+1}\q^n)_\P)$ denotes the trace map.   Furthermore, we define
\[
E^\pm((K_n)_\P):= E^\pm(K(p^{n+1})_\P)^\Delta\subset E((K_n)_\P),
\]
where $\Delta:=\Gal(K(p)/K(1))$. Given a prime $\mathfrak{Q}$ of $K(p^\infty)$ lying above $\q$, we may define $E^\pm((K_\infty)_\mathfrak{Q})$ similarly.
Let  $\bullet,\circ\in\{+,-\}$, then we have the signed Selmer group over $K_n$ given by:
\[
  \Sel_p^{\circ\moon}(E/K_n)=\ker\left(\Sel_p(E/K_n)\rightarrow\prod_{\P|\p}\frac{H^1((K_n)_\P,E[p^\infty])}{E^\circ((K_n)_\P)\otimes\Qp/\Zp}\times \prod_{\mathfrak{Q}|\q}\frac{H^1((K_n)_\mathfrak{Q},E[p^\infty])}{E^\moon((K_n)_\mathfrak{Q})\otimes\Qp/\Zp} \right).
\]
We have a similar definition for $\Sel_p^{\circ\moon}(E/K_n)$ on taking $E^\pm((K_\infty)_\star)$ to be the union of  $E^\pm((K_n)_\star)$.

\begin{theorem}[Kim's Control Theorem]\label{kimscontroltheorem}
Let $\circ,\moon\in\{+,-\}$. Then the natural module morphisms
$$\Sel^{\circ\moon}_p(E/K_n) \rightarrow \Sel^{\circ\moon}_p(E/K_\infty)[\omega_n^\circ(\gamma_\p)][\omega_n^\moon(\gamma_\q)]$$ have bounded kernel and cokernel as $n$ varies.
\end{theorem}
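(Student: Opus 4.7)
The plan is to run the standard snake-lemma control argument of Mazur--Kobayashi, adapted to the biquadratic tower by decoupling the analysis at primes above $\p$ from that at primes above $\q$. For each $n \in \{0,1,\dots,\infty\}$, let $S$ denote a finite set of primes of $K$ containing those above $p$, the archimedean places, and the primes of bad reduction of $E$. The signed Selmer group fits into a Poitou--Tate style exact sequence
$$0 \to \Sel_p^{\circ\moon}(E/K_n) \to H^1(G_S(K_n), E[p^\infty]) \xrightarrow{\mathrm{loc}} \prod_{v \in S} \calJ_v^{\circ\moon}(K_n),$$
where $\calJ_v^{\circ\moon}$ denotes $H^1((K_n)_v, E[p^\infty])$ modulo the relevant local condition: $E^\circ \otimes \Qp/\Zp$ for $v \mid \p$, $E^\moon \otimes \Qp/\Zp$ for $v \mid \q$, and $E((K_n)_v) \otimes \Qp/\Zp$ otherwise. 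I would draw the analogous diagram for $K_\infty$, so that the target Selmer group appears with its natural $\Lambda$-action, then intersect the sequence with the $[\omega_n^\circ(\gamma_\p)][\omega_n^\moon(\gamma_\q)]$-torsion of each term and feed the result into the snake lemma.

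For the global vertical map $H^1(G_S(K_n), E[p^\infty]) \to H^1(G_S(K_\infty), E[p^\infty])^{\Gamma_n}$, inflation--restriction gives kernel and cokernel inside the cohomology of $\Gamma_n$ with coefficients in $E(K_\infty)[p^\infty]$, which is finite since $E$ is defined over $\Q$; these groups are therefore uniformly bounded in $n$. At a prime $v \in S$ with $v \nmid p$, only finitely many primes of $K_\infty$ lie above $v$ because $K_\infty/K$ is unramified away from $p$, and standard local arguments (good reduction, or local Tate duality at the bad primes combined with finiteness of inertia acting on the Tate twist) show the local error is bounded. The heart of the matter is the local analysis at $\P \mid \p$ (and symmetrically at $\mathfrak{Q} \mid \q$), where one must show that
$$\frac{H^1((K_n)_\P, E[p^\infty])}{E^\circ((K_n)_\P) \otimes \Qp/\Zp} \longrightarrow \left(\frac{H^1((K_\infty)_\P, E[p^\infty])}{E^\circ((K_\infty)_\P) \otimes \Qp/\Zp}\right)[\omega_n^\circ(\gamma_\p)]$$
has bounded kernel and cokernel. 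The signed subgroups $E^\pm$ are engineered precisely so that, by Kobayashi's trace-compatibility calculation adapted to $(K_\infty)_\P$, the module $E^\circ((K_\infty)_\P) \otimes \Qp/\Zp$ is cofree over the appropriate one-variable Iwasawa algebra and its $\omega_n^\circ(\gamma_\p)$-torsion recovers $E^\circ((K_n)_\P) \otimes \Qp/\Zp$ up to finite error. Since the $\q$-tower $K(\q^\infty)$ is unramified at $\p$ with finite residue extension, $\gamma_\q$ acts on the $\P$-local terms through a finite quotient, so the factor $\omega_n^\moon(\gamma_\q)$ contributes only bounded noise and decouples from the $\P$-analysis.

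The main obstacle is the $\P$-local computation just described: one must carry Kobayashi's $\pm$-theory from $\Q_p$ to the larger local field $(K_\infty)_\P$, whose residue field has grown along the unramified direction, and verify that the resulting $\pm$-subgroups retain the cofreeness and torsion properties needed to match $K_n$-signed local conditions to $\omega_n^\circ(\gamma_\p)$-torsion at infinite level. Once this is in hand (as performed by Kim in \cite{kimspaper}), the snake lemma applied to the two-variable diagram yields the bounded kernel and cokernel by combining the $\p$- and $\q$-local analyses with the global bound, using that $G_\p$ and $G_\q$ together generate $\Gamma$.
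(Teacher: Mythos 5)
The paper's own proof is a citation: Theorem~\ref{kimscontroltheorem} is \cite[Proposition 2.19]{kimspaper} with $\ff=1$ and $m=n$, together with a remark that the hypothesis $4\nmid[K(1)_v:\Qp]$ appearing there (used to establish a freeness property of $E^+$) is automatic once one works over the base fields $K_\p$ and $K_\q$. Your proposal instead sketches the snake-lemma control argument that underlies Kim's proof, which is a reasonable thing to do, but it contains a genuine error in the local analysis at $\p$.

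You write that ``the $\q$-tower $K(\q^\infty)$ is unramified at $\p$ with finite residue extension, $\gamma_\q$ acts on the $\P$-local terms through a finite quotient, so the factor $\omega_n^\moon(\gamma_\q)$ contributes only bounded noise and decouples from the $\P$-analysis.'' This is false, and in fact contradicts your own next paragraph, where you correctly say the residue field of $(K_\infty)_\P$ ``has grown along the unramified direction.'' Under the standing assumption that only finitely many primes of $K_\infty$ lie over $p$, the decomposition group of $\P$ has finite index $p^t$ in $\Gamma$, so $(K_\infty)_\P/\Qp$ is a genuine $\Zp^2$-extension: $\gamma_\p$ acts through the cyclotomic direction and $\gamma_\q$ (up to the finite index) acts as Frobenius on the infinite unramified tower, not through a finite quotient. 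In particular, the residue extension is infinite and the $\gamma_\q$-direction cannot be dismissed as bounded noise. This is precisely why the statement of the theorem carries both a $[\omega_n^\circ(\gamma_\p)]$ and a $[\omega_n^\moon(\gamma_\q)]$ torsion condition, and why the $\P$-local step is a genuinely two-variable problem: one must analyze the structure of $E^\circ((K_\infty)_\P)\otimes\Qp/\Zp$ as a module over the two-variable local Iwasawa algebra, and the freeness result Kim needs (the very point the paper's proof spends its remark on) is two-dimensional. Your ``decoupling'' shortcut would fail, and the ``bounded noise'' claim has no justification. You also pass over the hypothesis discrepancy ($4\nmid[K(1)_v:\Qp]$) that the paper's proof explicitly addresses; since the statement is being applied outside the literal hypotheses of \cite[Proposition 2.19]{kimspaper}, that point cannot be omitted.
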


\begin{proof}This is \cite[Proposition 2.19]{kimspaper} with $\mathfrak{f}=1$ and $m=n$. Note that there is an assumption  $4\nmid[K(1)_v:\Qp]$ in \textit{loc. cit.}, where $v$ is a prime of $K(1)$ above $p$ when either $\circ$ or $\moon$ equals $+$. This assumption was used in the proof of Theorem~2.8 of \textit{op. cit.}, where certain freeness condition on $E^+$ is proved. However, since we are only interested in the base fields $K_\p$ and $K_\q$, the corresponding freeness condition is true and hence the control theorem we state is true following the same proof of \cite[Proposition 2.19]{kimspaper}. See also \cite[Theorem~3.34]{KO} where  a more general version of \cite[Theorem~2.8]{kimspaper} is proved.
\end{proof}

\begin{lemma}\label{lem:bigdiag}
Let $n\ge0$ and $\P|p$ a prime of $K_n$ above $p$. We write
\[
E^{\circ}_\star=\bigoplus_{\P|\p}E^\circ((K_n)_\P),\quad
E^1_\star=\bigoplus_{\P|\p}\hat E(K_\star),\quad
E^\forall_\star=\bigoplus_{\P|\p}\hat E((K_n)_\P),
\]
where $\star\in\{\p,\q\}$ is the prime of $K$ lying below $\P$.
Furthermore, we write
$$
E^{\circ\moon}=E_\p^\circ\oplus  E_\q^\moon
$$
for $\circ,\moon\in\{+,-,1,\forall\}$. Then, all horizontal and vertical sequences of the following commutative diagram are exact:
\[
\begin{tikzcd}
	&0\arrow[d] & 0\arrow[d, shift left=6]  \;\; \directsum  \;\; 0\arrow[d, shift right=6] &  0 \arrow[d]&\\
	0 \arrow[r] &E^{11} \arrow[d]\arrow [r]& E^{1+}\arrow[d, shift left=6]  \directsum  E^{1-}\arrow[d, shift right=6] \arrow[r] & E^{1\forall}\arrow[d] \arrow[r] & 0\\
	\begin{tabular}{c}$0$ \\ \\$0$\end{tabular}\arrow[r, shift left=5] \arrow[r, shift right=5] &\begin{tabular}{c}$E^{+1}$\\ $\directsum$ \\ $E^{-1}$\end{tabular}\arrow [r, shift left =5] \arrow [r, shift right =5]\arrow[d] & \begin{tabular}{ccc} $E^{++}\directsum E^{+-}$ \\$ \directsum  \quad \quad \directsum $\\ $E^{-+} \directsum E^{--}$ \end{tabular} \arrow[d, shift left=6] \arrow[d, shift right=6]\arrow [r, shift left =5] \arrow [r, shift right =5]& \begin{tabular}{c}$E^{+\forall}$\\$ \directsum$ \\ $E^{-\forall}$\end{tabular} \arrow[d] \arrow [r, shift left =5] \arrow [r, shift right =5] & \begin{tabular}{c}$0$ \\ \\$0$\end{tabular}\\
	0 \arrow[r] &E^{\forall1}\arrow[d] \arrow [r]& E^{\forall+}\arrow[d, shift left=6]  \directsum E^{\forall-} \arrow[d, shift right =6]\arrow[r] & E^{\forall\forall} \arrow[d]\arrow[r] & 0.\\
	& 0& 0\;\; \directsum   \;\;0 & 0 &
\end{tikzcd}
\]
\end{lemma}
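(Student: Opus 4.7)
The plan is to reduce the exactness of every row and column in the diagram to a single local plus/minus short exact sequence at each prime $\star\in\{\p,\q\}$:
\begin{equation}\label{eq:kkplan}
0 \to E^1_\star \xrightarrow{x \mapsto (x, x)} E^+_\star \oplus E^-_\star \xrightarrow{(a, b) \mapsto a - b} E^\forall_\star \to 0.
\end{equation}
The first step is to verify \eqref{eq:kkplan}. Its exactness is equivalent to the pair of identities $E^+_\star + E^-_\star = E^\forall_\star$ and $E^+_\star \cap E^-_\star = E^1_\star$. Since $p$ splits in $K$, the completion $K_\star$ is isomorphic to $\Qp$, so the local $\Zp$-extension at $\star$ has the same structural shape as the cyclotomic $\Zp$-extension of $\Qp$. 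The required identities therefore follow from Kim's construction in \cite{kimspaper}, generalizing Kobayashi's original results in \cite{kobayashi03}.

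Next, I would observe that every object in the diagram is of the form $E^{\circ\bullet} = E^\circ_\p \oplus E^\bullet_\q$. Consequently, each row (with the $\p$-index $\circ$ fixed and the $\q$-index varying) reads
$$0 \to E^\circ_\p \oplus E^1_\q \to E^\circ_\p \oplus E^\circ_\p \oplus E^+_\q \oplus E^-_\q \to E^\circ_\p \oplus E^\forall_\q \to 0,$$
which is the external direct sum of the trivial diagonal-difference sequence $0 \to E^\circ_\p \to E^\circ_\p \oplus E^\circ_\p \to E^\circ_\p \to 0$ with \eqref{eq:kkplan} at the prime $\q$. An analogous description holds for each column, with the roles of $\p$ and $\q$ reversed: the column is the direct sum of \eqref{eq:kkplan} at $\p$ and a trivial diagonal-difference sequence on the $\q$-factor. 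Since direct sums of short exact sequences are again short exact, every row and column is exact. Commutativity of each square in the diagram follows immediately from the explicit descriptions of the diagonal inclusions and difference projections.

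The only substantive mathematical input is the exactness of \eqref{eq:kkplan}, which is already available from Kim's work; the remainder is a purely formal direct-sum manipulation. I therefore do not expect any serious obstacle beyond carefully tracking the bookkeeping of the direct-sum decompositions to match the precise arrangement of $+$ and $-$ summands in the middle row and middle column of the displayed diagram.
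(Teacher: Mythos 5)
Your proposal is correct and takes essentially the same route as the paper: reduce to the single-prime short exact sequence $0 \to E^1_\star \to E^+_\star \oplus E^-_\star \to E^\forall_\star \to 0$ (which packages the two identities $E^+_\star\cap E^-_\star=E^1_\star$ and $E^+_\star + E^-_\star = E^\forall_\star$, the first following from the definitions and the second being the substantive input from Kim's Proposition~2.15, generalizing Kobayashi), and then observe that every row and column of the $3\times 3$ diagram is the external direct sum of this sequence at one prime with a trivial diagonal-difference sequence at the other. The paper compresses the bookkeeping into ``repeated application of \cite[Proposition~2.15]{kimspaper}''; you spell it out, which is fine, though the phrase ``the local $\Zp$-extension at $\star$ has the same structural shape as the cyclotomic $\Zp$-extension of $\Qp$'' is imprecise (locally at $\p$ one sees a $\Zp^2$-extension combining cyclotomic and unramified directions, and the plus/minus conditions are defined with respect to the cyclotomic direction only; this is precisely the setting Kim treats).
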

\begin{proof}It follows from the definition of $E^\pm_\star$ that $$E^+_\star\cap E^-_\star= E_\star^1$$
(c.f. \cite[proof of Proposition~8.12]{kobayashi03}).
The result follows from a repeated application of \cite[Proposition~2.15]{kimspaper}.
\end{proof}

\begin{proposition}\label{prop:kimexact}
Let $n\ge0$. For $\circ,\moon\in\{+,-,1,\forall\}$, let $E^{\circ\moon}=E_\p^\circ\oplus  E_\q^\moon$ be as in Lemma~\ref{lem:bigdiag}. We write
\[
\Sel^{\circ\moon}=\ker\left(\Sel_p(E/K_n)\rightarrow\frac{\bigoplus_{\P|\p}H^1((K_n)_\P,E[p^\infty])}{E_\p^\circ\otimes\Qp/\Zp}\times \frac{\bigoplus_{\mathfrak{Q}|\q}H^1((K_n)_\mathfrak{Q},E[p^\infty])}{E^\moon_\q\otimes\Qp/\Zp} \right).
\]
Then, the all horizontal and vertical sequences of the following commutative diagram are exact:
\[
\begin{tikzcd}
	&0\arrow[d] & 0\arrow[d, shift left=6]  \;\; \directsum  \;\; 0\arrow[d, shift right=6] &  0 \arrow[d]&\\
	0 \arrow[r] &\Sel^{11} \arrow[d]\arrow [r]&\Sel^{1+}\arrow[d, shift left=6]  \directsum  \Sel^{1-}\arrow[d, shift right=6] \arrow[r] & \Sel^{1\forall}\arrow[d] \\
	\begin{tabular}{c}$0$ \\ \\$0$\end{tabular}\arrow[r, shift left=5] \arrow[r, shift right=5] &\begin{tabular}{c}$\Sel^{+1}$\\ $\directsum$ \\ $\Sel^{-1}$\end{tabular}\arrow [r, shift left =5] \arrow [r, shift right =5]\arrow[d] & \begin{tabular}{ccc} $\Sel^{++}\directsum \Sel^{+-}$ \\$ \directsum  \quad \quad \directsum $\\ $\Sel^{-+} \directsum \Sel^{--}$ \end{tabular} \arrow[d, shift left=6] \arrow[d, shift right=6]\arrow [r, shift left =5] \arrow [r, shift right =5]& \begin{tabular}{c}$\Sel^{+\forall}$\\$ \directsum$ \\ $\Sel^{-\forall}$\end{tabular} \arrow[d]  \\
	0 \arrow[r] &\Sel^{\forall1}\arrow [r]& \Sel^{\forall+} \directsum \Sel^{\forall-}\arrow[r] & \Sel^{\forall\forall} .
\end{tikzcd}\]
Furthermore, the cokernel of the  last map on all horizontal and vertical sequences are finite.
\end{proposition}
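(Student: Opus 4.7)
My plan is to deduce Proposition~\ref{prop:kimexact} from Lemma~\ref{lem:bigdiag} by a repeated snake lemma / Mayer--Vietoris argument, using that the Selmer groups $\Sel^{\circ\moon}$ are by definition kernels of maps out of the same ambient group $\Sel_p(E/K_n)$ into local quotients whose numerators are controlled by Lemma~\ref{lem:bigdiag}.

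First I would transfer the exact sequences of Lemma~\ref{lem:bigdiag} to the divisible side. Since each of the modules $E^{\circ\moon}$ is a sum of formal-group points over $p$-adic fields, it is torsion-free over $\Zp$, so tensoring the four basic short exact sequences
$$0\to E^{11}\to E^{1+}\oplus E^{1-}\to E^{1\forall}\to 0,\qquad 0\to E^{\forall 1}\to E^{\forall +}\oplus E^{\forall -}\to E^{\forall\forall}\to 0$$
(and the two vertical analogues) with $\Qp/\Zp$ preserves short-exactness. Using each of these as a Mayer--Vietoris sequence of subgroups inside the ambient module
$$\calL = \bigoplus_{\P\mid\p}H^1((K_n)_\P,E[p^\infty])\oplus\bigoplus_{\Q\mid\q}H^1((K_n)_\Q,E[p^\infty]),$$
I obtain corresponding exact sequences of the quotients $Q^{\circ\moon}:=\calL/(E^\circ_\p\oplus E^\moon_\q)\otimes\Qp/\Zp$, e.g.\ a sequence
$$0\to Q^{11}\to Q^{1+}\oplus Q^{1-}\to Q^{1\forall}\to 0,$$
where the last map is a difference and its surjectivity is precisely what was proven at the level of $E^{\circ\moon}\otimes\Qp/\Zp$ in the previous step.

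Next I would form, for each of the four relevant rows/columns, the commutative diagram
$$
\begin{tikzcd}[column sep = small]
0 \arrow[r] & \Sel^{11} \arrow[r] \arrow[d,hook] & \Sel^{1+}\oplus \Sel^{1-} \arrow[r] \arrow[d,hook] & \Sel^{1\forall} \arrow[d,hook] \\
0 \arrow[r] & \Sel_p(E/K_n) \arrow[r,"\Delta"] \arrow[d] & \Sel_p(E/K_n)^{\oplus 2} \arrow[r] \arrow[d] & \Sel_p(E/K_n) \arrow[d] \arrow[r] & 0\\
0 \arrow[r] & Q^{11} \arrow[r] & Q^{1+}\oplus Q^{1-} \arrow[r] & Q^{1\forall} \arrow[r] & 0
\end{tikzcd}
$$
whose columns are short exact by the very definition of $\Sel^{\circ\moon}$. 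Applying the snake lemma to the bottom two rows, whose horizontal exactness was just established, identifies the top row as the kernel sequence and immediately gives exactness at $\Sel^{11}$ and at the middle term. The cokernel of the rightmost map in the top row is the kernel of the connecting map from $\coker(\text{middle column})\to\coker(\text{right column})$, so it injects into a submodule of the right column's cokernel. The same template applies to the vertical sequences and to the outer row/column of the big diagram.

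The main obstacle is the claim that the cokernel of the final map in each Selmer sequence is finite. I expect to handle this by observing that each such cokernel sits inside a $\Gamma_n$-cohomology group associated to the failure of Kim's control theorem at finite level, or equivalently inside the kernel/cokernel controlled by the same argument that produces bounded kernels and cokernels in Theorem~\ref{kimscontroltheorem}: only finitely many global classes beyond $\Sel^{1+}+\Sel^{1-}$ can map into the image of $\Sel_p(E/K_n)$ in $Q^{1\forall}$. Concretely, I would identify the obstruction with a subquotient of $\ker\bigl(\Sel_p(E/K_n)\to Q^{\forall\forall}\bigr)/\bigl(\Sel^{++}+\Sel^{+-}+\Sel^{-+}+\Sel^{--}\bigr)$, which is finite by the standard Mazur--Kobayashi--Kim finiteness of the cokernel in the classical signed control theorem together with Lemma~\ref{lem:bigdiag} applied to the local conditions.
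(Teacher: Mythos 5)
Your Mayer--Vietoris/snake-lemma skeleton is exactly the right approach and is essentially Kobayashi's proof of \cite[Proposition~10.1]{kobayashi03}, to which the paper simply defers. The first two steps are sound: tensoring the local short exact sequences of Lemma~\ref{lem:bigdiag} with $\Qp/\Zp$ preserves exactness because each $E^{\circ\moon}$ is a $\Zp$-torsion-free module of formal-group points (so $\mathrm{Tor}_1$ vanishes); passing to quotients inside $\calL$ is correctly handled by the nine lemma; and the snake lemma applied to the middle row $\Sel_p(E/K_n)\to\Sel_p(E/K_n)^{\oplus 2}\to\Sel_p(E/K_n)$ over the bottom row of $Q$'s identifies the top row as the kernel sequence, giving exactness at the first two terms. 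One small indexing slip: the connecting map of the snake lemma runs $\Sel^{1\forall}\to\coker f_1$ (cokernel of the \emph{left} column $\Sel_p(E/K_n)\to Q^{11}$), not into the cokernel of the right column; so the cokernel of $\Sel^{1+}\oplus\Sel^{1-}\to\Sel^{1\forall}$ injects into $\coker\bigl(\Sel_p(E/K_n)\to Q^{11}\bigr)$, not $\coker\bigl(\Sel_p(E/K_n)\to Q^{1\forall}\bigr)$.

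The genuine gap is in the finiteness of the final cokernel. From your own snake-lemma computation, what needs to be bounded is a submodule of $\coker\bigl(\Sel_p(E/K_n)\to Q^{\circ\moon}\bigr)$ for the various $(\circ,\moon)$, and this cokernel is controlled by a global duality (Cassels--Poitou--Tate) argument, not by Kim's control theorem. Theorem~\ref{kimscontroltheorem} compares $\Sel^{\circ\moon}_p(E/K_n)$ with $\Sel^{\circ\moon}_p(E/K_\infty)[\ldots]$, i.e.\ it measures the failure of \emph{descent} between levels; it says nothing about surjectivity-up-to-finite of the localization map at a fixed level $n$. Your proposed concrete identification of the obstruction with a subquotient of $\Sel_p(E/K_n)/(\Sel^{++}+\Sel^{+-}+\Sel^{-+}+\Sel^{--})$ does not match what the snake lemma produces (the relevant quotient is $\ker(\coker f_1\to\coker f_2)$, not that quotient), and the finiteness of $\Sel_p(E/K_n)/(\Sel^{++}+\cdots)$ is itself one of the cokernel statements you are trying to prove, so the reasoning is circular. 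The correct route, following Kobayashi, is to invoke the Poitou--Tate exact sequence to embed the cokernel of the localization map into the Pontryagin dual of a compact Selmer group for $T$ with dual local conditions, and then use the structure of that compact module (together with the exact sequences of local conditions from Lemma~\ref{lem:bigdiag}) to get finiteness; this is a separate arithmetic input beyond the formal diagram chasing.
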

Note in particular that $\Sel^{\forall\forall}=\Sel_p(E/K_n)$.
\begin{proof}
This follows from the same proof of \cite[Proposition~10.1]{kobayashi03}.
\end{proof}

\begin{theorem}\label{thm:ap0}
 Let $E/\QQ$ be an elliptic curve with good supersingular reduction at $p$ and $a_p=0$. Then,
\[
\rank_{\Zp}\Sel_p(E/K_{n})^\vee=O(p^n)
\]
for $n\gg0$.
\end{theorem}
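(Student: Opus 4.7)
The plan is to mimic the structure of the proof of Theorem~\ref{thm:ordranks}, with Kim's Control Theorem~\ref{kimscontroltheorem} playing the role of Greenberg's (applied to each of the four signed Selmer groups), and with the diagram of Proposition~\ref{prop:kimexact} providing the mechanism that ties the signed Selmer information back to the full Selmer group $\Sel_p(E/K_n)$.

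The first reduction is purely diagrammatic. From the bottom row of Proposition~\ref{prop:kimexact}, the map $\Sel^{\forall+}\oplus\Sel^{\forall-}\to\Sel^{\forall\forall}=\Sel_p(E/K_n)$ has finite cokernel, and iterating the same observation with the third column for each $\circ\in\{+,-\}$ yields, after dualizing,
\[
\rank_{\Zp}\Sel_p(E/K_n)^\vee \leq \sum_{\circ,\moon\in\{+,-\}}\rank_{\Zp}\Sel^{\circ\moon}_p(E/K_n)^\vee + O(1).
\]
For each fixed $(\circ,\moon)$, Kim's Control Theorem~\ref{kimscontroltheorem} provides a map with finite kernel and cokernel; Pontryagin duality then identifies
\[
\rank_{\Zp}\Sel^{\circ\moon}_p(E/K_n)^\vee = \rank_{\Zp}\Sel^{\circ\moon}_p(E/K_\infty)^\vee/(\omega_n^\circ(\gamma_\p),\omega_n^\moon(\gamma_\q)) + O(1).
\]
Since $\omega_n^\pm$ divides $\omega_n$, this is dominated by the $\Zp$-rank of the $\Gamma_n$-coinvariants of $\Sel^{\circ\moon}_p(E/K_\infty)^\vee$.

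The main step, and the likely obstacle, is to verify that $\Sel^{\circ\moon}_p(E/K_\infty)^\vee$ is $\Lambda$-torsion for each sign pair. Once this is in hand, Theorem~\ref{thm:harris} with $r=0$ forces the coinvariant rank to be $O(p^n)$, and assembling the pieces gives $\rank_{\Zp}\Sel_p(E/K_n)^\vee=O(p^n)$. To prove the torsion I would replay the $H_\cyc$-invariant argument from Theorem~\ref{thm:ordranks}: it suffices to show $(\Sel^{\circ\moon}_p(E/K_\infty)^{H_\cyc})^\vee$ is $\Zp[[\Gamma_\cyc]]$-torsion, and a signed analogue of the decomposition used in \cite[proof of Proposition~8.4]{lp} should identify this with $\Sel^{\circ}_p(E/\QQ_\cyc)^\vee\oplus\Sel^{\moon}_p(E^{(K)}/\QQ_\cyc)^\vee$ (up to swapping the two signs according to how complex conjugation interchanges $\p$ and $\q$). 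Both summands are $\Zp[[\Gamma_\cyc]]$-torsion by Kobayashi~\cite{kobayashi03}. The delicate point I expect to need to check carefully is that the $H_\cyc$-invariants really are compatible with the signed local conditions at both primes $\p$ and $\q$ simultaneously, which is the signed counterpart of the unramified splitting step in the ordinary proof.
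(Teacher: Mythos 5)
Your proposal is correct and follows essentially the same path as the paper: bound $\rank_{\Zp}\Sel_p(E/K_n)^\vee$ by the four signed ranks via Proposition~\ref{prop:kimexact}, convert to coinvariants via Kim's Control Theorem, establish $\Lambda$-cotorsionness of each $\Sel_p^{\circ\moon}(E/K_\infty)$ through the $H_\cyc$-invariant decomposition from \cite[proof of Proposition~8.4]{lp} together with Kobayashi's cyclotomic cotorsionness, and finish with Theorem~\ref{thm:harris}. The signed decomposition you flagged as the delicate point is exactly what the paper invokes, citing the same reference.
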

\begin{proof}
 Proposition~\ref{prop:kimexact} implies that
 \[
 \rank_{\Zp}\Sel_p(E/K_n)^\vee\le \sum_{\circ,\moon\in\{+,-\}}\rank_{\Zp}\Sel_p^{\circ\moon}(E/K_n)^\vee.
 \]
Theorem~\ref{kimscontroltheorem} allows us to  rewrite the right-hand side as
\[
 \sum_{\circ,\moon\in\{+,-\}}\rank_{\Zp}\Sel_p^{\circ\moon}(E/K_\infty)_{\langle\omega_n^\circ(\gamma_\p),\omega_n^\moon(\gamma_\q)\rangle}^\vee,
\]
 which is bounded by 
\[
\sum_{\circ,\moon\in\{+,-\}}\rank_{\Zp}\Sel_p^{\circ\moon}(E/K_\infty)_{\langle\omega_n(\gamma_\p),\omega_n(\gamma_\q)\rangle}^\vee \sum_{\circ,\moon\in\{+,-\}}\rank_{\Zp}\Sel_p^{\circ\moon}(E/K_\infty)_{\Gamma_n}^\vee
\]
since $\omega_n^\pm$ divide $\omega_n$.

Under the same notation of the proof of Theorem~\ref{thm:ordranks}, by \cite[proof of Proposition~8.4]{lp}, there is an isomorphism of $\Zp[[\Gamma_\cyc]]$-modules
\[
\Sel_p^{\circ\moon}(E/K_\infty)^{H_\cyc}\cong \Sel_p^\circ(E/\QQ_\cyc)\oplus \Sel_p^\moon(E^{(K)}/\QQ_\cyc).
\]
The two Selmer groups on the right-hand side are both $\Zp[[\Gamma_\cyc]]$-cotorsion (\cite[Theorem~2.2]{kobayashi03}). Hence, $\Sel_p^{\circ\moon}(E/K_\infty)^\vee$ is $\Lambda$-torsion and we are done by Theorem~\ref{thm:harris}.
\end{proof}

\section{The general supersingular case}

In this section, $E$ is supposed to be an elliptic curve with good supersingular at $p$ and $a_p\ne0$. In particular, $a_p=\pm p$.
\subsection{$\sharp/\flat$-Coleman maps and local properties}\label{S:local}
Let $F_\infty$ and $k_\cyc$ be the unramified $\Zp$-extension and the $\Zp$-cyclotomic extension of $\Qp$ respectively. Let $k_\infty$ be the compositum of $F_\infty$ and $k_\cyc$. For $n\ge 0$, $k_n$ and $F_n$ denotes the sub-extensions of $k_\infty$ and $F_\infty$ such that $[k_n:\Qp]=p^n$ and $[F_n:\Qp]=p^n$ respectively. Let  $\Gamma_p=\Gal(k_\infty/\Qp)$, $\Gamma_\ur=\Gal(F_\infty/\Qp)=\langle\sigma\rangle$ and $\Gamma_\cyc=\Gal(k_\cyc/\Qp)=\langle \gamma\rangle$, where $\gamma$ is a  fixed topological generator of $\Gamma_\cyc$ and $\sigma$ is the arithmetic Frobenius.

We recall from \cite{sprung12,llz0} that for $\bullet\in\{\sharp,\flat\}$, there is a Coleman map $\Col_\bullet^\cyc:H^1(\Qp,T\otimes\Zp[[\Gamma_\cyc]])\rightarrow\Zp[[\Gamma_\cyc]]$ decomposing Perrin-Riou's big logarithm map
\[
\LL_{\cyc}=\begin{pmatrix}
\omega&\vp(\omega)
\end{pmatrix}\Mlog\begin{pmatrix}
 \Col_\sharp^\cyc\\\Col_\flat^\cyc
 \end{pmatrix},
\]
 where $\omega$ is a basis of $\Fil^0\Dcris(T)$, $\Mlog$ is the logarithmic matrix defined as
\[
\lim_{n\rightarrow\infty}A^{n+1}C_n\cdots C_1,
\]
with $A=\begin{pmatrix}
0&\frac{-1}{p}\\
1&\frac{a_p}{p}
\end{pmatrix}$ and $C_n=\begin{pmatrix}
a_p&1\\
\Phi_n(\gamma)&0
\end{pmatrix}$.

As shown in \cite[\S2.3]{BL2}, we may take inverse limits of the Coleman maps over $F_n$ to define two-variable versions of these maps, decomposing the two-variable big logarithm map of Loeffler-Zerbes in \cite{LZ0}. Furthermore, on choosing a basis of the Yager module defined in \cite[\S3]{LZ0}, we may assume that the Coleman maps land inside $\Zp[[\Gamma_p]]$. More precisely, we have
\[
\Col_\sharp,\Col_\flat:H^1(\Qp,T\otimes\Zp[[\Gamma_p]])\rightarrow\Zp[[\Gamma_p]],
\]
such that
\begin{equation}\label{eq:decomp}
 \LL=\begin{pmatrix}
 \omega&\vp(\omega)
 \end{pmatrix}\Mlog\begin{pmatrix}
 \Col_\sharp\\\Col_\flat
 \end{pmatrix},
\end{equation}
 where $\LL:H^1(\Qp,T\otimes \Zp[[\Gamma_p]])\rightarrow \calH(\Gamma_p)\otimes \Dcris(T)$ is Loeffler-Zerbes' big logarithm map multiplied by the chosen basis of the Yager module. See  \cite[\S2]{sprung16} and \cite[\S2.3]{BL2}.

 Let $F= F_m\cdot k_n$ for some integers $m$ and $n$. The Coleman maps can be used to study Bloch-Kato's Selmer condition over $F$, which is commonly denoted by $H^1_f(F,T)$. Note that in the case of elliptic curves, there is an isomorphism $H^1_f(F,T)\cong \hat{E}(\mathfrak{M}_F)$, where $\hat{E}$ denotes the formal group attached to $E$ at $p$ and $\mathfrak{M}_F$ denotes the maximal ideal of the ring of integers of $F$ (c.f. \cite[Example 3.11]{BK}).
 \begin{proposition}\label{prop:H1f}
 Let $z\in \HIw(\Qp,T\otimes \Zp[[\Gamma_p]])$ and $\eta$ a  character on $\Gamma_p$ whose cyclotomic part is of conductor $p^{n+1}>1$. If we write $e_\eta$ for the idempotent corresponding to the character $\eta$, then, $e_\eta\cdot z$ lies inside $e_\eta\cdot H^1_f(F,T)$ if and only if the element in 
\begin{equation}\label{eq:evaluate}
 H_{\sharp,n}\Col_\sharp(z)^{\sigma^{-n-1}}+H_{\flat,n} \Col_\flat(z)^{\sigma^{-n-1}}\in\Zp[[\Gamma_p]]
\end{equation} vanishes at $\eta$, where $H_{\sharp,n}$ and $H_{\flat,n}$ are given by the entries of the first row of
 \[
 C_n\cdots C_1.
 \]
 \end{proposition}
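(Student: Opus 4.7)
The plan is to reduce the statement to the one-variable (cyclotomic) analog established in \cite{sprung12, sprung16} and then transport it to the two-variable setting via the Yager module formalism of \cite{LZ0, BL2}. The starting point is the classical characterization of Bloch--Kato's $H^1_f(F,T)$ via the dual exponential map: for a character $\eta$ of cyclotomic conductor $p^{n+1}>1$, the component $e_\eta\cdot z$ lies in $e_\eta\cdot H^1_f(F,T)$ if and only if the projection of $\LL(z)(\eta)$ onto $\Fil^0\Dcris(T)=\Qp\cdot\omega$ vanishes. So the task is to express this $\omega$-component in terms of the Coleman maps.

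The decomposition \eqref{eq:decomp} immediately gives
\[
\LL(z)(\eta)=\omega\cdot\bigl[\text{first row of }\Mlog(\eta)\bigr]\cdot\begin{pmatrix}\Col_\sharp(z)(\eta)\\ \Col_\flat(z)(\eta)\end{pmatrix}+\vp(\omega)\cdot[\cdots],
\]
so it suffices to show that the first row of $\Mlog(\eta)$ is $(H_{\sharp,n}(\eta), H_{\flat,n}(\eta))$ up to a nonzero scalar. Here I would invoke the definition $\Mlog=\lim_{m\to\infty}A^{m+1}C_m\cdots C_1$: since $\eta$ has cyclotomic conductor $p^{n+1}$, we have $\Phi_{n+1}(\gamma)(\eta)=0$, so $C_{n+1}(\eta)$ has rank one and all subsequent factors $A\cdot C_i(\eta)$ for $i\geq n+1$ act by scalars on the degenerate subspace. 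A direct computation, carried out in the cyclotomic setting in \cite[\S3]{sprung12}, then shows that the first row of $\lim_m A^{m+1}C_m(\eta)\cdots C_{n+1}(\eta)$ is a nonzero scalar times the row vector $(1,0)$, so that the first row of $\Mlog(\eta)$ equals a nonzero scalar times the first row of $C_n(\eta)\cdots C_1(\eta)=\bigl(H_{\sharp,n}(\eta),H_{\flat,n}(\eta)\bigr)$.

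Finally, I must account for the twist by $\sigma^{-n-1}$. This is where the two-variable Coleman maps differ from their cyclotomic prototypes. As recalled in Section~\ref{S:local}, the maps $\Col_\sharp,\Col_\flat$ land in $\Zp[[\Gamma_p]]$ only after trivializing the Yager module of \cite[\S3]{LZ0} via a chosen basis; under the identification $\Gamma_p\cong\Gamma_{\ur}\times\Gamma_{\cyc}$, this basis transforms under $\sigma\in\Gamma_\ur$ in a way that, upon evaluating the cyclotomic variable at a character of conductor $p^{n+1}$, produces the Frobenius twist by $\sigma^{-n-1}$ on the unramified factor. Combining this with the matrix computation above yields the desired equivalence.

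The main obstacle I anticipate is the bookkeeping in the last paragraph: carefully matching the normalizations between the cyclotomic and the two-variable big logarithm maps so that the Yager-module trivialization produces precisely the $\sigma^{-n-1}$ twist, rather than some other power or inverse. The matrix analysis in step two is essentially a formal consequence of the recursion defining $\Mlog$, so once the normalizations are pinned down the proposition follows by direct evaluation.
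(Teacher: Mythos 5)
Your overall route---reduce to vanishing of the dual exponential, pass through the big logarithm and the matrix $\Mlog$, and exploit the degeneracy of the $C$-matrices at $\eta$---is the same as the paper's, but there are two concrete problems. First, an off-by-one error: for $\eta$ of cyclotomic conductor $p^{n+1}$, the element $\eta(\gamma)$ is a primitive $p^n$-th root of unity, so it is $\Phi_n(\gamma)(\eta)=0$ (not $\Phi_{n+1}$), and it is the \emph{second row of $C_n(\eta)$} that vanishes; for $m>n$ the matrices $C_m(\eta)$ are nondegenerate (indeed $\Phi_m(\gamma)(\eta)=p$). So the degeneracy you need enters at $C_n$, and the product $C_n\cdots C_1(\eta)$ has vanishing second row, which is exactly how the paper kills the $\vp(\omega)$-component.

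Second, and more seriously, the source of the $\sigma^{-n-1}$ twist is not the Yager module. That trivialization is a single fixed basis choice made once to land the two-variable Coleman maps in $\Zp\lb\Gamma_p\rb$; it does not depend on $n$ or on $\eta$ and cannot produce an $n$-dependent Frobenius power. The twist---and simultaneously the disappearance of the $A^{n+1}$ factor in $\Mlog(\eta)=A^{n+1}C_n\cdots C_1(\eta)$ (this is \cite[Lemma~3.7]{llz1})---comes from the precise interpolation formula of \cite[Theorem~4.15]{LZ0}, namely $\Phi^{-n-1}\LL(z)(\eta)=\varepsilon(\eta^{-1})\exp^*(e_\eta\cdot z)$, where $\Phi$ acts as $\vp$ on $\Dcris(T)$ and as $\sigma$ on the unramified coefficients. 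Applying $\Phi^{-n-1}$ cancels the $A^{n+1}$ (the matrix of $\vp$ on the basis $\{\omega,\vp(\omega)\}$) and shifts the Coleman values by $\sigma^{-n-1}$, producing exactly the expression \eqref{eq:evaluate}. Without invoking this formula you cannot obtain the twist, and you also cannot justify your ``starting point'': the classical criterion is that $\LL(z)(\eta)$ (equivalently $\exp^*(e_\eta z)$) vanishes, not merely its $\omega$-projection; the latter turns out to be equivalent only \emph{after} the matrix computation shows the $\vp(\omega)$-component to be proportional to the $\omega$-component. Your closing remark about the ``main obstacle'' of matching normalizations is precisely the gap: it is filled by quoting \cite[Theorem~4.15]{LZ0} rather than by any property of the Yager module.
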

\begin{proof}
Let $\exp^*:H^1(F,T)\rightarrow F\otimes\Dcris(T)$ be Bloch-Kato's dual exponential map. Then, its kernel is exactly $H^1_f(F,T)$. Recall from \cite[Theorem~4.15]{LZ0} that
\[
\Phi^{-n-1}\LL(z)(\eta)=\varepsilon(\eta^{-1})\exp^*(e_\eta\cdot z),
\]
where $\varepsilon(\eta^{-1})$ denotes the $\varepsilon$-factor of $\eta^{-1}$ and $\Phi$ is the operator on $F_m\otimes\Dcris(T)$ which act as  $\sigma$ on $F_m$ and $\vp$ on $\Dcris(T)$. Hence, $e_\eta\cdot z$ lies inside $e_\eta\cdot H^1_f(F,T)$ if and only if $\Phi^{-n-1}\LL(z)(\eta)=0$.

By definition $\Mlog$ is invariant under $\sigma$. Furthermore, \cite[Lemma~3.7]{llz1} tells us that 
\[
\Mlog(\eta)= A^{n+1}C_n\cdots C_1(\eta).
\]
If we combine this with \eqref{eq:decomp}, we have
\[
\Phi^{-n-1}\LL(z)(\eta)=\begin{pmatrix}
\omega&\vp(\omega)
\end{pmatrix} C_n\cdots C_1(\eta)\begin{pmatrix}
\Col_\sharp(z)^{\sigma^{-n-1}}(\eta)\\ \Col_\flat(z)^{\sigma^{-n-1}}(\eta)
\end{pmatrix}.
\]
Recall from the definition of $C_n$ that the second row of $C_n(\eta)$ vanishes as $\eta$ sends $\gamma$ to a primitive $p^n$-th root of unity. Consequently, the expression above simplifies to \eqref{eq:evaluate} and we are done.
\end{proof}

\subsection{$\sharp/\flat$-Selmer groups}
Recall that $\gamma_1,\ldots,\gamma_{p^t}$ is a chosen set of coset representatives of $\Gamma/\Gamma_\p$. This gives the following decomposition
\[
H^1(K_\p,T\otimes\Lambda)=\bigoplus_{i=1}^{p^t}H^1(K_\p,T\otimes\Zp[[\Gamma_\p]])\cdot \gamma_i.
\]
Note that we may identify $K_\p$ with $\Qp$ and $\Gamma_\p$ with $\Gamma_p$. We define the $\sharp/\flat$-Coleman maps at $\p$ by
\begin{align*}
\Col_{\p,\bullet}:H^1(K_\p,T\otimes\Lambda)&\rightarrow \Lambda\\
x=\sum x_i\cdot \gamma_i&\mapsto \sum \Col_\bullet(x_i)\cdot \gamma_i.
\end{align*}

For $\bullet\in\{\sharp,\flat\}$, we define 
\[
E^\bullet_\p\subset \bigoplus_{i=1}^{p^t} H^1(K_{\infty,\gamma_i},E[p^\infty])
\] to be the orthogonal complement of $\ker \Col_{\p,\bullet}$ under local Tate duality. Similarly, we may define $\Col_{\q,\bullet}$ and $E^\bullet_\q$ in the same way.
\begin{remark}\label{rk:local}
 The embeddings we fixed at the beginning mean that when we localize at $\p$, the Galois group $G_\p$ is sent to $\Gamma_\cyc$, whereas $G_\q$ is sent to $\Gamma_\ur$. We may choose our topological generators so that $\gamma_\p$ and $\gamma_\q$ are sent to $\gamma$ and $\sigma$ respectively. When we localize at $\q$, the images as reversed.
\end{remark}

Let $N$ be the conductor of $E$ and let $S$ be the set of primes dividing $pN$. Given a number field $F$, we write $H^1_S(F,-)$ for the Galois cohomology of the Galois group of the maximal algebraic extension of $F$ that is unramified outside $S$.
Recall that the classical $p$-Selmer group over $K_\infty$ is defined to be
\[
\Sel_p(E/K_\infty)=\ker\left(H^1_S(K_\infty,E[p^\infty])\rightarrow\bigoplus_{v|pN}\frac{H^1(K_{\infty,v},E[p^\infty])}{E(K_{\infty,v})\otimes\Qp/\Zp} \right).
\]
We  define four $\sharp/\flat$-Selmer groups 
\begin{equation}
\Sel_p^{\circ\moon}(E/K_\infty)=\ker\left(\Sel_p(E/K_\infty)\rightarrow \frac{ \bigoplus_{i=1}^{p^t} H^1(K_{\infty,\gamma_i},E[p^\infty])}{E^\circ_\p}\oplus\frac{ \bigoplus_{i=1}^{p^t} H^1(K_{\infty,\delta_i},E[p^\infty])}{E^\moon_\q}\right)
\end{equation}
for $\circ,\moon\in\{\sharp,\flat\}$.
We shall from now on assume the following hypothesis holds.

\vspace{0.5cm}
\begin{itemize}
\item[\textbf{(H.tor)}] At least one of the four Selmer groups $\Sel_p^{\circ\moon}(E/K_\infty)$, $\circ,\moon\in\{\sharp,\flat\}$, is $\Lambda$-cotorsion.
\end{itemize}
\vspace{0.5cm}

These Selmer groups can be linked to certain $2$-variable $p$-adic $L$-functions constructed in \cite{lei14}. When the latter is non-zero, then (H.tor) holds (see \cite[\S3.4]{CCSS}). 

These Selmer groups generalize the signed Selmer groups of Kim. However, we do not know whether an analogue for Proposition~\ref{prop:kimexact} holds for these Selmer groups. Consequently, we cannot generalize the proof of Theorem~\ref{thm:ap0} directly to this setting. Instead, we make use of ideas from \cite{sprung13,llz1} to estimate the ranks of $E$ over $K_\infty$.

Let
\[
\Sel_p^0(E/K_\infty)=\ker\left(\Sel_p(E/K_\infty)\rightarrow  \bigoplus_{v|p} H^1(K_{\infty,v},E[p^\infty]) \right)
\]
be the strict Selmer group over $K_\infty$. 
We recall the Poitiou-Tate exact sequence
\begin{align}
\label{eq:PT}\HIw(K_\infty,T)\rightarrow \frac{H^1(K_\p,T\otimes\Lambda)}{\ker\Col_{\p,\circ}}\oplus \frac{H^1(K_\q,T\otimes\Lambda)}{\ker\Col_{\q,\moon}}\rightarrow&\\
 \Sel^{\circ\moon}(E/K_\infty)^\vee\rightarrow& \Sel^{0}(E/K_\infty)^\vee\rightarrow 0,\notag
\end{align}
where $H^i_{\mathrm{Iw}}(K_\infty,T)$ is defined a the inverse limit $\displaystyle \varprojlim H^i_S(K_n,T)$ (see for example \cite[(7.20)]{kobayashi03}).

\begin{lemma}\label{lem:finetor}
Under the hypothesis \textbf{(H.tor)}, the Iwasawa module $ \Sel_p^{0}(E/K_\infty)^\vee$ is $\Lambda$-torsion, whereas $H^1_{\mathrm{Iw}}(K_\infty,T)$ is of rank $2$.
\end{lemma}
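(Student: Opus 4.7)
The plan is to read both assertions off the Poitou-Tate exact sequence \eqref{eq:PT} by $\Lambda$-rank calculations, supplementing the rank of $\HIw(K_\infty,T)$ by the global Euler-Poincar\'e characteristic.

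The torsion-ness of $\Sel_p^0(E/K_\infty)^\vee$ is essentially immediate. By hypothesis \textbf{(H.tor)}, fix $\circ,\moon\in\{\sharp,\flat\}$ for which $\Sel_p^{\circ\moon}(E/K_\infty)$ is $\Lambda$-cotorsion. The last two arrows of \eqref{eq:PT} then exhibit $\Sel_p^0(E/K_\infty)^\vee$ as a quotient of $\Sel_p^{\circ\moon}(E/K_\infty)^\vee$, so it is $\Lambda$-torsion as well.

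For the rank of $\HIw(K_\infty,T)$, I would first establish the lower bound by computing the $\Lambda$-rank of the middle term
\[
M:=\frac{H^1(K_\p,T\otimes\Lambda)}{\ker\Col_{\p,\circ}}\oplus\frac{H^1(K_\q,T\otimes\Lambda)}{\ker\Col_{\q,\moon}}
\]
of \eqref{eq:PT}. Each summand embeds into $\Lambda$ via its Coleman map and so has $\Lambda$-rank at most one; since the Coleman maps $\Col_{\star,\bullet}$ of Section~\ref{S:local} are nonzero, each summand has rank exactly one, giving $\rank_\Lambda M=2$. Because the cokernel of $\HIw(K_\infty,T)\to M$ injects into the $\Lambda$-torsion module $\Sel_p^{\circ\moon}(E/K_\infty)^\vee$, the image in $M$ has $\Lambda$-rank $2$, whence $\rank_\Lambda\HIw(K_\infty,T)\geq 2$.

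For the matching upper bound, I would invoke the global Euler-Poincar\'e characteristic for the $\Zp^2$-extension of the imaginary quadratic field $K$ and the rank-two representation $T$, which reads
\[
\rank_\Lambda\HIw(K_\infty,T)-\rank_\Lambda H^0_{\mathrm{Iw}}(K_\infty,T)-\rank_\Lambda H^2_{\mathrm{Iw}}(K_\infty,T)=r_2(K)\cdot\rank_{\Zp}T=2.
\]
The term $H^0_{\mathrm{Iw}}(K_\infty,T)$ vanishes because $T$ has no $G_{K_\infty}$-invariants, and the main obstacle is verifying that $H^2_{\mathrm{Iw}}(K_\infty,T)$ is $\Lambda$-torsion. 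My plan for this is to derive it from the cotorsion-ness of $\Sel_p^0(E/K_\infty)$ already established, via a further Poitou-Tate identification expressing the non-torsion part of $H^2_{\mathrm{Iw}}$ in terms of that of $\Sel_p^0(E/K_\infty)^\vee$ modulo local $H^2$ contributions at primes in $S$ -- which are controlled by local Tate duality above $p$ and are pseudo-null at the tame primes. Once this is in place, both bounds match and $\rank_\Lambda\HIw(K_\infty,T)=2$.
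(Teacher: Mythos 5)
Your proof of the first assertion --- that $\Sel_p^0(E/K_\infty)^\vee$ is $\Lambda$-torsion --- is exactly the paper's proof: both read it off the tail of the Poitou-Tate sequence \eqref{eq:PT} once one of the signed Selmer groups is known to be $\Lambda$-cotorsion.

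For the rank of $\HIw(K_\infty,T)$ your route is genuinely different. The paper simply invokes the weak Leopoldt conjecture via \cite[Corollary~A.8]{LZ0}, so the $\Lambda$-torsion-ness of $H^2_{\Iw}(K_\infty,T)$ is taken as a black box. You instead propose to derive it from \textbf{(H.tor)} and feed it into the global Euler characteristic $\rank_\Lambda H^1_{\Iw}-\rank_\Lambda H^0_{\Iw}-\rank_\Lambda H^2_{\Iw}=2$; this is more self-contained and makes explicit where \textbf{(H.tor)} enters, at the cost of more work. Two observations. First, the separate lower bound you extract from \eqref{eq:PT} is redundant: once $H^0_{\Iw}(K_\infty,T)=0$ and $H^2_{\Iw}(K_\infty,T)$ is torsion, the Euler characteristic alone gives $\rank_\Lambda\HIw(K_\infty,T)=2$ in one stroke. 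Second, and more importantly, the decisive step --- torsion-ness of $H^2_{\Iw}$ --- is only announced, not proved. The precise mechanism you are gesturing at is this: global duality identifies $\Sha^2_{\Iw}(K_\infty,T)^\vee$ with the strictest Selmer group $\ker\bigl(H^1_S(K_\infty,E[p^\infty])\to\bigoplus_{v\in S}H^1(K_{\infty,v},E[p^\infty])\bigr)$, which is a $\Lambda$-submodule of $\Sel_p^0(E/K_\infty)$ and hence cotorsion; and the quotient $H^2_{\Iw}(K_\infty,T)/\Sha^2_{\Iw}(K_\infty,T)$ injects into $\bigoplus_{v\in S}H^2_{\Iw}(K_{\infty,v},T)$, whose summands are all $\Lambda$-torsion (at $v\mid p$ they are Pontryagin dual to the finite group $H^0(K_{\infty,v},E[p^\infty])$, and the tame contributions are likewise torsion). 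Spelling this out would close the gap; as written, your plan is correct in spirit but the key step is a sketch rather than a proof.
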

\begin{proof}
The torsionness of $ \Sel_p^{0}(E/K_\infty)^\vee$ follows from that of $\Sel_p^{\circ\moon}(E/K_\infty)^\vee$ and \eqref{eq:PT}. The statement about $\HIw(K_\infty,T)$ then follows from the weak Leopoldt conjecture, c.f. \cite[Corollary~A.8]{LZ0}. 
\end{proof}

\begin{remark}\label{rk:torsion}
Let $c_1,c_2\in\HIw(K_\infty,T)$ be any elements such that $\HIw(K_\infty,T)/\langle
c_1,c_2\rangle$ is $\Lambda$-torsion. Then, the cotorsionness of $\Sel^{\circ\moon}(E/K_\infty)$ together with \eqref{eq:PT} imply that
\[
\det\begin{pmatrix}
\Col_{\p,\circ}(\loc_\p(c_1))&\Col_{\q,\moon}(\loc_\q(c_1))\\
\Col_{\p,\circ}(\loc_\p(c_2))&\Col_{\q,\moon}(\loc_\q(c_2))
\end{pmatrix}\ne 0.
\]
We shall denote this determinant by $\det\Col_{\circ\moon}(c_1\wedge c_2)$ from now on.
\end{remark}

\subsection{Classical Selmer groups and Coleman maps}
In this subsection, we relate modified Selmer-type groups to Coleman maps. The following cohomology groups were introduced when studying the growth of Sha in \cite{kobayashi03} and \cite{sprung13}:

For $n\ge 0$, we define
\begin{align*}
\Y_n'&=\coker\left(\HIw(K_\infty,T)_{\Gamma_n}\rightarrow \prod_{v|p}H^1_{/f}(K_{n,v},T)\right),\\
\Y_n&=\coker\left(H^1_S(K_n,T)\rightarrow \prod_{v|p}H^1_{/f}(K_{n,v},T)\right).
\end{align*}
We also define the $\X^0_n$ and $\X_n$ to be the Pontryagin dual of the strict Selmer group $\Sel_p^0(E/K_n)$
and the classical Selmer group $\Sel_p(E/K_n)$ respectively. Then, we have the Poitou-Tate exact sequence over $K_n$ gives the following exact sequence:
\[
0\rightarrow \Y_n\rightarrow \X_n\rightarrow \X_n^0\rightarrow 0
\]
(see for example \cite[(10.35)]{kobayashi03}).

\begin{proposition}
The $\Zp$-rank of $\X_n^0$ is $O(p^n)$.
\end{proposition}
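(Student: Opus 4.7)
The plan is to apply Theorem~\ref{thm:harris} to the $\Lambda$-torsion module $\Sel_p^0(E/K_\infty)^\vee$ and then transfer the resulting $\Zp$-rank bound on its $\Gamma_n$-coinvariants down to $\X_n^0$ via a control theorem for the strict Selmer group. Lemma~\ref{lem:finetor} already gives that $M := \Sel_p^0(E/K_\infty)^\vee$ has $\Lambda$-rank $r=0$, so Theorem~\ref{thm:harris} immediately yields
$$\rank_{\Zp} M_{\Gamma_n} \;=\; \rank_{\Zp}\bigl(\Sel_p^0(E/K_\infty)^{\Gamma_n}\bigr)^\vee \;=\; O(p^n).$$

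Next, I would establish a control theorem for the strict Selmer group: the natural restriction
$$\Sel_p^0(E/K_n) \longrightarrow \Sel_p^0(E/K_\infty)^{\Gamma_n}$$
has kernel and cokernel whose Pontryagin duals have $\Zp$-rank $O(p^n)$ (in fact one expects them to be cofinitely generated over $\Zp$ with bounded corank). The key advantage here is that, in sharp contrast with the classical Selmer group whose lack of control in the case $p\mid a_p\ne 0$ is precisely the motivation for the rest of the paper, the strict Selmer group has \emph{trivial} local condition at primes above $p$, so no analysis of formal group points $\hat E((K_n)_v)$ is required. The global defect is handled by inflation-restriction through $H^i(\Gamma_n, E[p^\infty]^{G_{K_\infty}})$; the local defect at primes $v \mid p$ reduces to comparing local Galois cohomology of $E[p^\infty]$ across the finite-to-infinite tower, for which the Galois groups in question are topologically cyclic (or pro-cyclic in two variables) with known cohomological behavior; and the local defect at $v \nmid p$ either vanishes at primes of good reduction (where $E(K_{n,v})\otimes\Qp/\Zp = 0$) or is controlled by the number of primes of $K_n$ above the finitely many bad primes of $K$, a quantity that grows only polynomially in $p^n$.

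Combining these two steps yields $\rank_{\Zp}\X_n^0 = O(p^n)$. The main obstacle will be Step~2: since no off-the-shelf $\Zp^2$-control theorem for the strict Selmer group is explicitly cited in the paper, some care is required in tracking local defects at bad primes and at primes above $p$ as $n$ varies. Fortunately, only the weak $O(p^n)$ bound on these defects is needed rather than the tight bounded-kernel-and-cokernel statements that are known in the cyclotomic setting for fine Selmer groups of elliptic curves with good reduction; this is substantially less delicate, and allows one to avoid engaging with the obstructions that force the subsequent sections of the paper to abandon control-theoretic methods for the full Selmer group.
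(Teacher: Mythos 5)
Your strategy is the same as the paper's: use Lemma~\ref{lem:finetor} to see that $\Sel_p^0(E/K_\infty)^\vee$ is $\Lambda$-torsion, apply Theorem~\ref{thm:harris}, and transfer via a control theorem for the strict Selmer group. Your parenthetical guess that the kernel and cokernel are actually of bounded corank is what the paper proves.

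One spot in your argument for the control theorem is too loose. You say the local defect at $v\nmid p$ ``is controlled by the number of primes of $K_n$ above the finitely many bad primes of $K$, a quantity that grows only polynomially in $p^n$.'' In the $\Zp^2$-tower, a bad prime of $K$ can split completely in $K_\infty$, in which case the number of primes above it in $K_n$ grows like $p^{2n}$, and ``polynomial'' alone does not give $O(p^n)$. The point that rescues the argument (and that the paper uses implicitly) is the dichotomy for the decomposition group of $v$ in $\Gamma$: it is either trivial or pro-cyclic $\cong\Zp$. If it is trivial, then $K_{\infty,w}=K_{n,v}$ and the local restriction $\gamma_{v_n}$ is an isomorphism, contributing nothing; if it is $\cong\Zp$, then $v$ is finitely decomposed in $K_\infty$, so only boundedly many places contribute, each with a kernel of bounded order (by Greenberg's argument as in \cite[Theorem~2]{greenberg}). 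This gives bounded kernel and cokernel, not merely $O(p^n)$. Also worth noting: for places $w\mid p$ the paper invokes $E(K_{\infty,w})[p^\infty]=0$ (from \cite[Proposition~3.1]{KO}) to make $\beta_n$ and $\gamma_{v_n}$ isomorphisms via inflation-restriction; your ``trivial local condition'' observation is correct but you still need this torsion-vanishing input, not just the absence of a $\hat E$-condition, to make the local vertical maps at $p$ injective.
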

\begin{proof}
We first show that  the strict Selmer group satisfy a control theorem.
Consider the following commutative diagram
\begin{equation} \label{diag:control}
			\begin{tikzcd}
				0 \arrow{r} & \Sel_p^0(E/K_n) 
				\arrow{r}
				\arrow[d,"\alpha_n"] & H^1(K_n,E[p^\infty])
				\arrow{r} \arrow[d,"\beta_n"] & \prod_{v_n}
				H^1(K_{n,v_n},E[p^\infty])
				\arrow[d,"\prod\gamma_{v_n}"]\\
				0 \arrow{r} & 
				\Sel_p^0(E/K_\infty)^{\Gamma_n} 
				\arrow{r}
				& 
				H^1(K_\infty,E[p^\infty])^{\Gamma_n}
				\arrow{r} & \prod_{w}
		H^1(K_{\infty,w},E[p^\infty])^{\Gamma_n},
		\end{tikzcd}
	\end{equation}
	where the vertical maps are restriction maps, the product on the first row runs over all places $v_n$ of $K_n$, whereas that on the second row runs over all places $w$ of $K_\infty$. By definitions, both rows are exact. Note that $E(K_{\infty,w})[p^\infty]=0$ for all places $w$ of $K_\infty$ such that $w|p$ (c.f. \cite[Proposition~3.1]{KO}). Therefore, $\gamma_{v_n}$ is an isomorphism for all $v_n|p$ by the inflation-restriction exact sequence. Similarly, $\beta_n$ is also an isomorphism. If $v_n\nmid p$, then the proof of \cite[Theorem 2]{greenberg} tells us that $\ker\gamma_{v_n}$ is finite of bounded order (see also \cite[Proposition~2.19]{kimspaper}). Therefore, on applying the snake lemma to \eqref{diag:control}, we deduce that $\ker\alpha_n$ and $\coker\alpha_n$ are finite and of orders bounded independently of $n$.
	
	Lemma~\ref{lem:finetor} tells us that $\X_n^0$ is $\Lambda$-torsion. Hence, Theorem~\ref{thm:harris}, together with the control theorem above imply that
	\[
	\rank_{\Zp}\X_n^0=\rank_{\Zp}\left(\X_\infty^0\right)_{\Gamma_n}=O(p^n),
	\]
	as required.
\end{proof}

Therefore, in order to study $\rank_{\Zp}\X_n$, it is enough to study $\rank_{\Zp}\Y_n$. Furthermore, there is a natural surjection $\Y'_n\rightarrow \Y_n$ (c.f. \cite[diagram (10.36)]{kobayashi03}). In particular, 
\[
\rank_{\Zp}\Y_n\le \rank_{\Zp}\Y_n'
\]
for all $n\ge0$ and we are reduced to estimate $\rank_{\Zp}\Y_n'$.
 We shall do so via the $\sharp/\flat$-Coleman maps.  Let us first recall the following calculations from \cite{CM}. We shall write $W=\mu_{p^\infty}\times \mu_{p^\infty}$. Given $w\in W$, $o(w)$ denotes the least integer $r$ such that $w^{p^r}=(1,1)$. Given $w=(w_1,w_2)\in W$, we shall write $\Zp[w]=\Zp[w_1,w_2]$. If $F\in\Lambda$, we may evaluate $F$ at $w$ in the following way. We may rewrite $F$ as a power series in $\gamma_\p-1$ and $\gamma_\q-1$, say $F_0(\gamma_\p-1,\gamma_\q-1)$. Then, we define
\[
F(w)=F_0(w_1-1,w_2-1)\in \Zp[w].
\]
Given any $\Lambda$-module $M$, we write $M_w=M\otimes \Zp[w]$ for the $\Zp[w]$-module induced by this evaluation map.

\begin{lemma}\label{lem:quotientranks}
Let $n\ge0$ be an integer and $M$ a $\Lambda$-module. Then, 
\[
\rank_{\Zp}M_{\Gamma_n}=\rank_{\Zp}\bigoplus_{w}M_w,
\]
where the direct sum runs over the conjugacy classes of $w\in W$ such that $w^{p^n}=(1,1)$.
\end{lemma}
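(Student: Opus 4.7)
The plan is to reduce the computation of $\rank_{\Zp} M_{\Gamma_n}$ to the Wedderburn decomposition of the finite group algebra $\Zp[\Gamma/\Gamma_n]$. First, I would identify $M_{\Gamma_n}$ with $M \otimes_\Lambda \Zp[\Gamma/\Gamma_n]$, using that $\Zp[\Gamma/\Gamma_n]$ is the quotient of $\Lambda$ by the augmentation ideal of $\Gamma_n$. Since extension of scalars from $\Zp$ to $\Qp$ preserves ranks, it suffices to compute $\dim_{\Qp}\bigl(M \otimes_\Lambda \Qp[\Gamma/\Gamma_n]\bigr)$.

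Next, I would decompose $\Qp[\Gamma/\Gamma_n]$ via characters. A $\overline{\Qp}$-valued character of $\Gamma/\Gamma_n \cong (\Z/p^n\Z)^2$ is determined by the pair $w = (\chi(\gamma_\p), \chi(\gamma_\q))$, which lies in the set of $w \in W$ with $w^{p^n} = (1,1)$. Two characters produce the same simple factor of the rational group algebra exactly when they are $\Gal(\overline{\Qp}/\Qp)$-conjugate, and standard character theory for finite abelian groups over $\Qp$ yields
\[
\Qp[\Gamma/\Gamma_n] \;\cong\; \prod_{[w]} \Qp(w_1, w_2),
\]
where $[w]$ runs over conjugacy classes of such $w$. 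The crucial bookkeeping point is that the projection $\Lambda \twoheadrightarrow \Qp(w_1, w_2)$ in this decomposition is precisely the evaluation map $F \mapsto F(w)$ that defines $M_w$; this follows from $\gamma_\p \mapsto w_1$ and $\gamma_\q \mapsto w_2$ being the defining relation of each simple factor.

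Putting the two steps together gives
\[
M_{\Gamma_n} \otimes_{\Zp} \Qp \;=\; \bigoplus_{[w]} \bigl(M \otimes_\Lambda \Qp(w_1,w_2)\bigr) \;=\; \bigoplus_{[w]} M_w \otimes_{\Zp} \Qp,
\]
and comparing $\Qp$-dimensions yields the stated identity of $\Zp$-ranks. I do not anticipate a serious obstacle: the Wedderburn decomposition of a finite abelian group algebra over $\Qp$ is classical, and the only subtlety is verifying that the idempotents agree with the evaluation maps used to define $M_w$, which is a direct unwinding of definitions. The same computation (in slightly different language) appears in Cuoco--Monsky \cite{CM}, and one could alternatively cite their result directly.
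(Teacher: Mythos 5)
Your argument is correct. The paper, however, disposes of this lemma by a one-line citation to Cuoco--Monsky \cite[Lemma~2.7]{CM}, which asserts the stronger fact that the natural map $M_{\Gamma_n}\to\bigoplus_w M_w$ has \emph{finite} kernel and cokernel; the rank equality is then immediate. Your proof instead rationalizes and invokes the Wedderburn decomposition $\Qp[\Gamma/\Gamma_n]\cong\prod_{[w]}\Qp(w_1,w_2)$ directly, together with the observation that the projection onto each simple factor is precisely the evaluation map $F\mapsto F(w)$ (and that $\Zp[w]\otimes_\Zp\Qp=\Qp(w_1,w_2)$, so $M_w\otimes_\Zp\Qp\cong M\otimes_\Lambda\Qp(w_1,w_2)$). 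This is a self-contained, more elementary route that yields exactly the rank statement needed here but does not recover the integral finiteness of the kernel and cokernel that Cuoco--Monsky prove; for the purposes of Lemma~\ref{lem:quotientranks} the extra information is unnecessary, and you correctly flag that one could alternatively cite \cite{CM}, which is what the paper does. One small point to be explicit about: the argument (and the statement) implicitly assumes $M$ is finitely generated over $\Lambda$, so that $M_{\Gamma_n}$ is a finitely generated $\Zp$-module and the $\Zp$-ranks in question are finite.
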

\begin{proof}
This is an immediate consequence of \cite[Lemma~ 2.7]{CM}, which says that the kernel and cokernel of the morphism $M_{\Gamma_n}\rightarrow\bigoplus_{w}M_w$ are both finite. 
\end{proof}

From now on, we fix two elements $c_1,c_2$ of $\HIw(K_\infty,T)$ such that  $\HIw(K_\infty,T)/\langle
c_1,c_2\rangle$ is $\Lambda$-torsion (c.f Remark~\ref{rk:torsion}). Let us write $\loc_{p,n}(c_i)$ for the image of $c_i$ inside  $H^1_{/f}(K_{n,p},T):=\prod_{v|p}H^1_{/f}(K_{n,v},T)$. 
\begin{corollary}
Let $n\ge0$ be an integer. Then,
\begin{equation}\label{eq:growthrank}
\rank_{\Zp}\Y_{n+1}'-\rank_{\Zp}\Y_n'\le\sum_\theta \rank_{\Zp}\left(e_\theta\cdot \frac{H^1_{/f}(K_{n,p},T)}{\langle\loc_{p,n}(c_1),\loc_{p,n}(c_2)\rangle}\right),
\end{equation}
where the direct sum runs over conjugacy classes of  characters on $\Gamma_{n+1}$ that do not factor through $\Gamma_n$. 
\end{corollary}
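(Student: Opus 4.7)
The strategy is to reinterpret $\Y'_n$ as the $\Gamma_n$-coinvariants of an Iwasawa-level cokernel, apply Lemma~\ref{lem:quotientranks} to decompose the rank difference into a sum over ``new'' characters of $\Gamma_{n+1}$, and then dominate each isotypic component by exploiting the $\Lambda$-torsionness of $\HIw(K_\infty,T)/\langle c_1,c_2\rangle$.

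Set $M := \HIw(K_\infty,T)$ and $\tilde{\calH} := \varprojlim_n H^1_{/f}(K_{n,p},T)$, and define the $\Lambda$-module
\[
\Y'_\infty := \coker\bigl(\loc_p\colon M \to \tilde{\calH}\bigr).
\]
Right-exactness of $\Gamma_n$-coinvariants gives $(\Y'_\infty)_{\Gamma_n} = \coker\bigl(M_{\Gamma_n} \to \tilde{\calH}_{\Gamma_n}\bigr)$, and the standard comparison between $\tilde{\calH}_{\Gamma_n}$ and the finite-level $H^1_{/f}(K_{n,p},T)$ (with kernel and cokernel of bounded $\Zp$-rank) identifies this with $\Y'_n$ up to bounded rank. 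Applying Lemma~\ref{lem:quotientranks} to $\Y'_\infty$ then yields
\[
\rank_{\Zp}\Y'_{n+1} - \rank_{\Zp}\Y'_n = \sum_\theta \rank_{\Zp}\bigl(e_\theta \cdot \Y'_\infty\bigr),
\]
where $\theta$ ranges over conjugacy classes of characters of $\Gamma_{n+1}$ not factoring through $\Gamma_n$.

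To produce the bound, observe that $N := M/\langle c_1,c_2\rangle$ is $\Lambda$-torsion by Remark~\ref{rk:torsion}. Hence the quotient $\loc_p(M)/\langle \loc(c_1),\loc(c_2)\rangle$ is itself a quotient of $N$, and there is a natural surjection
\[
\tilde{\calH}\big/\langle \loc(c_1),\loc(c_2)\rangle \twoheadrightarrow \Y'_\infty
\]
whose kernel is a quotient of $N$. The functor $e_\theta\cdot(-)$ preserves surjectivity, so
\[
\rank_{\Zp}(e_\theta\cdot \Y'_\infty) \leq \rank_{\Zp}\Bigl(e_\theta \cdot \tilde{\calH}\big/\langle \loc(c_1),\loc(c_2)\rangle\Bigr).
\]
Summing over $\theta$ yields the inequality, the right-hand side of the corollary being this $e_\theta$-isotypic quotient, expressed at the appropriate finite level at which $\theta$ becomes visible.

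The main obstacle is the identification $\rank_{\Zp}\Y'_n = \rank_{\Zp}(\Y'_\infty)_{\Gamma_n}$: one must verify that the comparison map $\tilde{\calH}_{\Gamma_n} \to H^1_{/f}(K_{n,p},T)$ has kernel and cokernel of $\Zp$-rank bounded independently of $n$. Such local-control statements are routine in the cyclotomic setting via Perrin-Riou's machinery, but require care in the $\Zp^2$-tower $K_\infty/K$.
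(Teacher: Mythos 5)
Your proposal follows essentially the same route as the paper's own (quite terse) proof: pass to an Iwasawa-level cokernel, apply Lemma~\ref{lem:quotientranks} to decompose the difference of coinvariant-ranks into contributions from ``new'' characters of $\Gamma_{n+1}$, and then dominate each isotypic piece by the corresponding piece of $H^1_{/f}(K_{n+1,p},T)/\langle\loc(c_1),\loc(c_2)\rangle$. The paper elides the auxiliary module $\Y'_\infty$ entirely, simply noting that $\Y'_n$ is a quotient of $H^1_{/f}(K_{n,p},T)/\langle\loc_{p,n}(c_1),\loc_{p,n}(c_2)\rangle$ and invoking Lemma~\ref{lem:quotientranks} together with the identification $M_w = e_\theta\cdot M$; you spell out the intermediate steps more carefully and correctly locate where the hard content lives.

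There is, however, a step in your argument that is stated more strongly than it can be. You write
\begin{equation*}
\rank_{\Zp}\Y'_{n+1} - \rank_{\Zp}\Y'_n = \sum_\theta \rank_{\Zp}\bigl(e_\theta \cdot \Y'_\infty\bigr)
\end{equation*}
as a consequence of Lemma~\ref{lem:quotientranks}, immediately after saying that the comparison $(\Y'_\infty)_{\Gamma_n}\leftrightarrow\Y'_n$ holds only ``up to bounded rank.'' If the comparison introduces an error of $O(1)$ per level, the displayed formula is only an equality up to $O(1)$, and the corollary as stated (an exact inequality, not an asymptotic) does not follow without a sign argument on that error. Moreover, the direction matters: local control gives an isomorphism $\HIw(K_{\infty,v},T)_{\Gamma_n}\cong H^1(K_{n,v},T)$ (because $H^0(K_{n,v},T)=0$ at supersingular $p$), and this induces a \emph{surjection} $(\Y'_\infty)_{\Gamma_n}\twoheadrightarrow\Y'_n$; but a surjection only yields $\rank_{\Zp}\Y'_n\le\rank_{\Zp}(\Y'_\infty)_{\Gamma_n}$, which goes the wrong way for telescoping from level $n$. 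You flag the local control as ``the main obstacle,'' and rightly so: the failure of the $f$-condition to descend exactly in the supersingular case is precisely the phenomenon that motivates signed Selmer groups, so treating it as a ``routine'' control statement is risky. A cleaner variant that avoids the issue is to use the surjection $\Y'_{n+1}\twoheadrightarrow\Y'_n$ induced by corestriction to write the rank difference as $\sum_\theta\rank_{\Zp}(e_\theta\cdot\Y'_{n+1})$ over new $\theta$, and then dominate by $e_\theta\cdot Q_{n+1}$ using only the finite-level statement ``$\Y'_{n+1}$ is a quotient of $Q_{n+1}$,'' which is exactly what the paper records.

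Finally, as a matter of bookkeeping, note that in the displayed inequality the modules on the right must carry the subscript $n+1$ (both $K_{n+1,p}$ and $\loc_{p,n+1}$): for $\theta$ a character of $\Gamma_{n+1}$ not factoring through $\Gamma_n$, the idempotent $e_\theta$ annihilates any $\Zp[\Gamma/\Gamma_n]$-module, so with the index $n$ the right-hand side would vanish identically. You implicitly correct this by interpreting the expression ``at the appropriate finite level at which $\theta$ becomes visible,'' which is the right reading.
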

\begin{proof}
Note that $\Y_n'$ is a quotient of $\frac{H^1_{/f}(K_{n,p},T)}{\langle\loc_{p,n}(c_1),\loc_{p,n}(c_2)\rangle}$. Therefore, the inequality follows from Lemma~\ref{lem:quotientranks} and the fact that the functor $M\rightarrow M_w$ for $w=(w_1,w_2)\in W$ is given by $M\rightarrow e_\eta\cdot M$, where $\theta$ is the character on $\Gamma$ that sends $\gamma_\p$ and $\gamma_\q$ to $w_1$ and $w_2$ respectively.
\end{proof}

 Let $\theta$ be a character of $\Gamma$ of conductor $\p^{r+1}\q^{s+1}$. When restricted to $\Gamma_\p$, Remark~\ref{rk:local} tells us that  the character $\theta$ gives a character on $\Gamma_\p$ (resp. $\Gamma_\q$) whose cyclotomic part is of conductor $p^{r+1}$ (resp. $p^{s+1}$). The element in \eqref{eq:evaluate} then leads us to introduce the following notation. For $\star\in\{\p,\q\}$, $z\in H^1(K_\star,T\otimes\Lambda)$ and $t\ge0$, we write
 \[
 \uCol_{\star,t}(z)=H_{\star,\sharp,t}\Col_{\star,\sharp}(z)^{\sigma^{-t-1}}+H_{\star,\flat,t}\Col_{\star,\flat}(z)^{\sigma^{-t-1}},
 \]
 where $H_{\star,\sharp,t}$ and $H_{\star,\flat,t}$ are the entries of the first row in the matrix obtained from replacing $\gamma$ by $\gamma_\star$ in the product $C_{t}\cdots C_1$.

\begin{lemma}\label{lem:countlem}
Let $\theta$ be a character of $\Gamma$ of conductor $\p^{r+1}\q^{s+1}$. Then $$\rank_{\Zp}\left(e_\theta\cdot \frac{H^1_{/f}(K_{n,p},T)}{\langle\loc_{p,n}(c_1),\loc_{p,n}(c_2)\rangle}\right)>0$$ if and only if 
\[
\det\begin{pmatrix}
\uCol_{\p,r}(\loc_\p(c_1))&\uCol_{\q,s}(\loc_\q(c_1))\\
\uCol_{\p,r}(\loc_\p(c_2))&\uCol_{\q,s}(\loc_\q(c_2))
\end{pmatrix}
\]
vanishes at $\theta$. When the aforementioned determinant does vanish at $\theta$, the $\Zp$-rank of $e_\theta\cdot \frac{H^1_{/f}(K_{n,p},T)}{\langle\loc_{p,n}(c_1),\loc_{p,n}(c_2)\rangle}$ is either $\dim_{\Qp}\Qp(\theta)$ or $2\dim_{\Qp}\Qp(\theta)$, where $\Qp(\theta)$ is the extension of $\Qp$ generated by the image of $\theta$.
\end{lemma}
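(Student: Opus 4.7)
The proof is a rank count via Proposition~\ref{prop:H1f} that reduces the question to linear algebra over $\Qp(\theta)$. The key point is to identify the $\theta$-isotypical quotient $e_\theta\cdot H^1_{/f}(K_{n,p},T)\otimes\Qp$ explicitly as $\Qp(\theta)^{\oplus 2}$ in coordinates given by the Coleman maps.

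First, for each $\star\in\{\p,\q\}$, I would note that $H^1(K_\star,T\otimes\Lambda)$ is free of rank $2$ over $\Lambda$ (via local Iwasawa cohomology applied to each $H^1(K_\star,T\otimes\Zp[[\Gamma_\star]])$-summand of the induced-module decomposition along the chosen coset representatives). Hence $e_\theta\cdot H^1(K_\star,T\otimes\Lambda)\otimes\Qp$ is a $\Qp(\theta)$-vector space of dimension $2$. By Bloch--Kato, $H^1_f$ has $\Zp$-rank equal to half of that of $H^1$, and the same relation persists after isotypical decomposition, so $e_\theta\cdot H^1_{/f}(K_{n,\star},T)\otimes\Qp$ is one-dimensional over $\Qp(\theta)$. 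Proposition~\ref{prop:H1f}, applied with $n=r$ at $\p$ and $n=s$ at $\q$ (using Remark~\ref{rk:local} to identify the cyclotomic parts of $\theta|_{\Gamma_\star}$), shows that the map $z\mapsto \uCol_{\star,\bullet}(z)(\theta)$ vanishes precisely on $e_\theta\cdot H^1_f$; it therefore factors through and yields an injection $e_\theta\cdot H^1_{/f}(K_{n,\star},T)\otimes\Qp\hookrightarrow \Qp(\theta)$, which must be an isomorphism on dimensional grounds. Taking the direct sum over $\star=\p,\q$ gives
\[
e_\theta\cdot H^1_{/f}(K_{n,p},T)\otimes_{\Zp}\Qp \;\xrightarrow{\,\sim\,}\; \Qp(\theta)\oplus\Qp(\theta).
\]

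Second, I would transport the classes $\loc_p(c_i)$ through this isomorphism. By construction each $\loc_p(c_i)$ is sent to the column vector $\bigl(\uCol_{\p,r}(\loc_\p c_i)(\theta),\,\uCol_{\q,s}(\loc_\q c_i)(\theta)\bigr)$, so the image of $\langle\loc_p c_1,\loc_p c_2\rangle$ is the $\Qp(\theta)$-span of the two columns of the $2\times 2$ matrix from the statement. The $\Zp$-rank of the quotient in question is therefore $\dim_{\Qp}\Qp(\theta)\cdot(2-d)$, where $d$ is the $\Qp(\theta)$-rank of this matrix. This rank is strictly positive precisely when $d<2$, equivalently when the determinant vanishes at $\theta$; and in that case $d\in\{0,1\}$ gives the two possible ranks $2\dim_{\Qp}\Qp(\theta)$ and $\dim_{\Qp}\Qp(\theta)$ claimed in the lemma.

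The main obstacle lies in the first step, specifically in pinning down the $\Qp(\theta)$-dimension of $e_\theta\cdot H^1_{/f}(K_{n,\star},T)\otimes\Qp$ to be exactly $1$ (rather than $0$). Proposition~\ref{prop:H1f} by itself only identifies the kernel of $\uCol_{\star,\bullet}(\cdot)(\theta)$; surjectivity onto $\Qp(\theta)$ has to be extracted from the Bloch--Kato dimension count, which in turn rests on matching the induced-module structure of the two-variable Iwasawa cohomology with the expected behaviour of $H^1_f$ at the $\theta$-isotypical level.
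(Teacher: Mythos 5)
Your argument matches the paper's proof in all essentials: both identify $e_\theta\cdot\prod_{v\mid\star}H^1_{/f}(K_{n,v},T)\otimes\Qp\cong\Qp(\theta)$ via $\uCol_{\star,t}$ and Proposition~\ref{prop:H1f}, then read off the rank of the quotient as linear algebra over $\Qp(\theta)$ with the two column vectors built from the Coleman values. You spell out the Bloch--Kato dimension count that upgrades the injection coming from Proposition~\ref{prop:H1f} to an isomorphism (a step the paper states without elaboration), but the key idea and the structure of the argument are the same.
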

\begin{proof}
For $t\in\{r,s\}$ and $\star\in\{\p,\q\}$, the morphism $\uCol_{\star,t}$ induces 
\[
e_\theta\cdot \prod_{v|\star}H^1_{/f}(K_{n,v},T)\otimes\Qp\cong \Qp(\theta)
\]
by Proposition~\ref{prop:H1f}. Consequently, if we write
\[
v_1=\left(\uCol_{\p,r}(\loc_\p(c_1)),\uCol_{\q,s}(\loc_\q(c_1))\right),\quad v_2=\left((\uCol_{\p,r}(\loc_\p(c_2)),\uCol_{\q,s}(\loc_\q(c_2))\right),
\]
then 
\[
e_\theta\cdot \frac{H^1_{/f}(K_{n,p},T)}{\langle\loc_{p,n}(c_1),\loc_{p,n}(c_2)\rangle}\otimes\Qp\cong \frac{\Qp(\theta)^{\oplus 2}}{\langle v_1,v_2\rangle}
\]
as $\Qp(\theta)$-vector spaces. Hence the result.
\end{proof}

For simplicity, we shall write the determinant above as $\det\uCol_{r,s}(c_1\wedge c_2)$. A direct calculation tells us that this is related to the determinants $\det\Col_{\circ\moon}(c_1\wedge c_2)$, $\circ,\moon\in\{\sharp,\flat\}$, via the following equation:
\begin{equation}\label{eq:formula}
\det\uCol_{r,s}(c_1\wedge c_2)(\theta)=\sum_{\circ,\moon\in\{\sharp,\flat\}}H_{\p,\circ,r}H_{\q,\moon,s}\det\uCol_{\circ\moon}(c_1\wedge c_2)(\theta).
\end{equation}
We shall study the $p$-adic valuations of the summands on the right-hand side of \eqref{eq:formula} to show that $\det\uCol_{r,s}(c_1\wedge c_2)(\theta)\ne 0$ for a large family of characters $\theta$.

\subsection{$p$-adic valuations of power series}

Given an element $\zeta\in \mu_{p^\infty}$, we shall write $\ord(\zeta)$ for the smallest integer $t$ such that $\zeta^{p^{t}}=1$.
\begin{proposition}
Let $F(X,Y)\in \Zp[[X,Y]]$ be a non-zero power series. There exist integers $a,b_1,b_2,c_1,c_2$ and $n_0$ such that for all $w=(w_1,w_2)\in W\setminus \left(\mu_{p^{n_0}} \times \mu_{p^{n_0}}\right) $, 
\[
\ord_p (F(w))=a+\frac{b_i}{\phi(p^m)}+\frac{c_i}{\phi(p^n)}
\]
with $i=1$ or $2$, whenever $|\ord(w_1)-\ord(w_2)|\gg0$.
\end{proposition}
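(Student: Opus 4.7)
The plan is to analyse the $p$-adic Newton polytope of $F$ and show that for $(w_1,w_2)$ in each of the two extreme sectors determined by the sign of $\ord(w_1)-\ord(w_2)$, a unique term of $F(w_1-1,w_2-1)$ dominates its $p$-adic valuation. First, extract the $\mu$-invariant: let $\mu$ be the largest integer with $p^\mu\mid F$ in $\Zp[[X,Y]]$ and write $F=p^\mu\tilde F$, so that $\bar F:=\tilde F\bmod p\in\Fp[[X,Y]]$ is non-zero. Set $a:=\mu$, so that it suffices to prove the formula for $\tilde F$ with $a$ replaced by $0$. Writing $\tilde F=\sum a_{ij}X^iY^j$, let $T:=\{(i,j):v_p(a_{ij})=0\}\subseteq\Z_{\geq 0}^2$, which is non-empty. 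By the well-ordering of $\Z_{\geq 0}^2$, define $(b_1,c_1)\in T$ to be the element minimising $j$ first and then $i$, and $(b_2,c_2)\in T$ the element minimising $i$ first and then $j$.

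Next, write $m:=\ord(w_1)$ and $n:=\ord(w_2)$; for $\ord(w_\ell)\geq 1$ one has $v_p(w_\ell-1)=1/\phi(p^{\ord(w_\ell)})$. Hence the $(i,j)$-term of $\tilde F(w_1-1,w_2-1)$ has $p$-adic valuation $v_p(a_{ij})+i/\phi(p^m)+j/\phi(p^n)$. The core claim is that when $m,n\geq n_0$ and $m-n\geq n_0$ (with $n_0$ sufficiently large depending on $F$), the minimum over $(i,j)$ of these valuations is uniquely attained at $(b_1,c_1)$. Granting this, the ultrametric inequality gives $\ord_p\tilde F(w_1-1,w_2-1)=b_1/\phi(p^m)+c_1/\phi(p^n)$, which is the $i=1$ case; the symmetric analysis in the sector $n-m\geq n_0$ then yields the $i=2$ formula with $(b_2,c_2)$.

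To establish uniqueness of the minimiser, I would split into three cases. If $v_p(a_{ij})\geq 1$, the term valuation is at least $1$, which exceeds $b_1/\phi(p^{n_0})+c_1/\phi(p^{n_0})$ once $n_0$ is chosen large. If $(i,j)\in T$ with $j=c_1$ and $(i,j)\neq(b_1,c_1)$, then $i>b_1$ by the definition of $(b_1,c_1)$, so the excess over the $(b_1,c_1)$-valuation is $(i-b_1)/\phi(p^m)>0$. If $(i,j)\in T$ with $j>c_1$, the excess equals $(i-b_1)/\phi(p^m)+(j-c_1)/\phi(p^n)$, which is positive whenever $\phi(p^m)>b_1\phi(p^n)$ (using $j-c_1\geq 1$ and $b_1-i\leq b_1$), i.e.\ whenever $m-n$ exceeds a constant depending only on $b_1$.

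The delicate case is the third one: since $T$ may be infinite and can contain points with $i$-coordinate as small as $0$ and $j$ only marginally larger than $c_1$, these are the terms that genuinely compete with the $(b_1,c_1)$-term. The hypothesis $|\ord(w_1)-\ord(w_2)|\gg 0$ is tailored precisely to ensure that the positive $1/\phi(p^n)$-contribution coming from $j-c_1\geq 1$ dominates the (bounded) negative $b_1/\phi(p^m)$-contribution coming from $b_1-i\leq b_1$, so that the minimiser is unique uniformly in $T$.
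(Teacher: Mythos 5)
Your proof is correct, and it takes a genuinely different route from the paper's. The paper first reduces (by multiplicativity of $\ord_p$) to the case where $G = F/p^a$ is irreducible, then invokes a Weierstrass preparation argument over the discrete valuation ring $\Zp[w_1]$ following Cuoco's proof of \cite[Proposition~2.9]{cuoco}: writing $G(w_1-1,Y) = u\cdot (w_1-1)^b\,(Y^c + d_{c-1}Y^{c-1} + \cdots + d_0)$ with $d_i \in (w_1-1)\Zp[w_1]$, the integers $(b,c)$ in the conclusion are the power of $w_1-1$ extracted and the Weierstrass degree, and the formula follows once $c/\phi(p^n) < 1/\phi(p^m)$. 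Your version bypasses the reduction to irreducible factors and the Weierstrass preparation altogether: you read the exponents $(b_1,c_1)$ and $(b_2,c_2)$ directly off the Newton polytope of $\tilde F \bmod p$ as lexicographic minimizers of the support $T$ (in the two possible orders), and then run a self-contained dominant-term argument, using that the gap $|m-n| \gg 0$ forces the $(b_1,c_1)$-monomial (or $(b_2,c_2)$-monomial, in the other sector) to have strictly smaller valuation than every competitor. This is more elementary, avoids the citation, and makes the integers in the conclusion completely explicit; the paper's approach is shorter on the page because it offloads the Newton-polygon analysis to Cuoco. One small point worth making explicit in your write-up: in Case~1 the sum is infinite, so you should note that all those terms have valuation bounded below by $1$, which is separated from the minimum $b_1/\phi(p^m)+c_1/\phi(p^n)$ by a positive gap once $n_0$ is large; combined with the strictly positive gaps you exhibit in Cases~2 and~3, the ultrametric inequality for convergent series then applies. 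You also implicitly assume $m,n \ge n_0$ (both large), which matches what the paper's proof and its Corollary actually use, even though the set $W \setminus (\mu_{p^{n_0}}\times\mu_{p^{n_0}})$ in the statement is slightly larger; this is an imprecision in the statement shared by both proofs, not a gap in yours.
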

\begin{proof}
Let $w=(w_1,w_2)$ and suppose that $\ord(w_1)=m$ and $\ord(w_2)=n$. We assume that $F(X,Y)=p^a G(X,Y)$ for some integer $a\ge0$ and $G(X,Y)\in \Zp[[X,Y]]\setminus p\Zp[[X,Y]]$ is irreducible.

Following \cite[proof of Proposition~2.9]{cuoco}, there exist integers $a,b\ge0$ independent of $m$ such that whenever $m\gg0$, we have the Weierstrass preparation
\[
G(w_1-1,Y)=u\times (w_1-1)^b(Y^c+d_{c-1}Y^{b-1}+\cdots d_1Y+d_0),
\]
where $u\in \Zp[w_1][Y]^\times$ and $d_i\in (w_1-1)\Zp[w_1]$. In particular, $\ord_p(d_i)\ge \frac{1}{\phi(p^m)}$ for all $i$. Consequently, we have
\[
\ord_p(G(w))=\frac{b}{\phi(p^m)}+\frac{c}{\phi(p^n)},
\]
whenever  $\frac{c}{\phi(p^n)}<\frac{1}{\phi(p^m)}$ (which can be ensured by $n-m\gg0$).

Similarly, when $m-n\gg0$, we may evaluate $\ord_p(G(w))$ by considering the Weierstrass preparation for $G(X,w_2-1)$.
\end{proof}

This can be translated to a result on elements in the Iwasawa algebra $\Lambda$ as follows.
\begin{corollary}\label{cor:valueC}
Let $F\in \Lambda$ be a non-zero element. There exist integers $a,b_1,b_2,c_1,c_2$ and $n_0$ such that whenever  $\theta$ is a character of $\Gamma$ of conductor $\p^{r+1}\q^{s+1}$ with $r,s\ge n_0$ and $|r-s|\gg0$, we have
\[
\ord_p (F(\theta))=a+\frac{b_i}{\phi(p^r)}+\frac{c_i}{\phi(p^s)}
\]
where $i=1$ or $2$ depending on whether $r>s$ or $s >r$.
\end{corollary}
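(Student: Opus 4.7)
The plan is to derive the corollary directly from the preceding proposition using the standard identification of the Iwasawa algebra with a formal power series ring. Recall that $\Lambda = \Zp\lb\Gamma\rb$ is identified with $\Zp\lb X, Y\rb$ via $X \mapsto \gamma_\p - 1$ and $Y \mapsto \gamma_\q - 1$; write $F_0(X,Y)$ for the power series attached to a given $F \in \Lambda$ under this identification.

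A character $\theta$ of $\Gamma$ of conductor $\p^{r+1}\q^{s+1}$ is determined by the pair $(w_1, w_2) := (\theta(\gamma_\p), \theta(\gamma_\q)) \in W$, where $w_1$ is a primitive $p^r$-th root of unity and $w_2$ is a primitive $p^s$-th root of unity; in particular $\ord(w_1) = r$ and $\ord(w_2) = s$. By construction,
\[
F(\theta) = F_0(w_1 - 1,\; w_2 - 1).
\]

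It then suffices to apply the preceding proposition to $F_0$ with $(m,n) = (r,s)$. The hypothesis $r, s \ge n_0$ guarantees that $(w_1, w_2) \notin \mu_{p^{n_0}} \times \mu_{p^{n_0}}$, while the hypothesis $|r-s| \gg 0$ is exactly the input required to invoke the valuation formula. The dichotomy $i = 1$ versus $i = 2$ in the corollary mirrors the two Weierstrass preparations used in the proof of the proposition: one prepares $F_0(X, w_2 - 1)$ in the variable $X$ (applicable when $r - s \gg 0$, i.e.\ $r > s$), and the other prepares $F_0(w_1 - 1, Y)$ in the variable $Y$ (applicable when $s - r \gg 0$, i.e.\ $s > r$). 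The constants $a, b_i, c_i, n_0$ output by the proposition depend only on $F_0$, hence only on $F$, which is what the corollary asserts.

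There is no substantive obstacle here: the corollary is a direct translation of the proposition under the dictionary between characters of $\Gamma$ and pairs in $W = \mu_{p^\infty}\times\mu_{p^\infty}$, together with the identification $\Lambda \cong \Zp\lb X,Y\rb$. The only point worth flagging is bookkeeping of the two regimes $r > s$ and $s > r$ so that one selects the correct pair $(b_i, c_i)$, but this is precisely what the proposition already provides.
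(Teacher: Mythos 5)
Your proof is correct and matches the paper's intended route: the corollary is presented without a proof precisely because it is the straightforward translation of the preceding proposition under the dictionary $\theta \leftrightarrow (w_1,w_2)=(\theta(\gamma_\p),\theta(\gamma_\q))$ and the identification $\Lambda\cong\Zp\lb X,Y\rb$, which is exactly what you carry out. (One pedantic note: $r,s\ge n_0$ does not by itself exclude $(w_1,w_2)\in\mu_{p^{n_0}}\times\mu_{p^{n_0}}$ when $r=s=n_0$, but this is harmless since $|r-s|\gg0$ forces $\max(r,s)$ large, or one can simply take the corollary's $n_0$ to exceed the proposition's.)
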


Let us now recall the following result from \cite[Lemma~4.5]{sprung13} and \cite[Proposition~4.6]{llz1}.

\begin{proposition}\label{prop:valueH}
Let $\eta$ be a character of $\Gamma_\cyc$ of conductor $p^{n+1}>1$ and let $H_{\sharp,n}$ and $H_{\flat,n}$ be as defined in \eqref{eq:evaluate}. Then,
\[
\ord_p(H_{\sharp,n}(\eta))=\begin{cases}
1+\sum_{i=1}^{\frac{n-1}{2}}\frac{1}{p^{2i-1}}&n\in2\ZZ+1,\\
\sum_{i=1}^{\frac{n}{2}}\frac{1}{p^{2i-1}}&n\in2\ZZ,
\end{cases}
\]
whereas
\[
\ord_p(H_{\flat,n}(\eta))=\begin{cases}
\sum_{i=1}^{\frac{n-1}{2}}\frac{1}{p^{2i}}&n\in2\ZZ+1,\\
1+\sum_{i=1}^{\frac{n}{2}-1}\frac{1}{p^{2i}}&n\in2\ZZ.
\end{cases}
\]
\end{proposition}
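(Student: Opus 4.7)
The argument proceeds by induction on $n$, exploiting the block structure of $C_j=\bigl(\begin{smallmatrix}a_p & 1 \\ \Phi_j(\gamma)& 0\end{smallmatrix}\bigr)$. The first step is to translate the statement into a two-term recurrence: expanding the matrix product $C_n\cdots C_1$ row-by-row, both first-row entries $H_{\sharp,n}$ and $H_{\flat,n}$ satisfy
\[
H_{\bullet,n}\;=\;a_p\,H_{\bullet,n-1}\;+\;\Phi_{n-1}(\gamma)\,H_{\bullet,n-2}\qquad (n\ge 2,\ \bullet\in\{\sharp,\flat\}),
\]
differing only in their initial conditions: $(H_{\sharp,0},H_{\sharp,1})=(1,a_p)$ and $(H_{\flat,0},H_{\flat,1})=(0,1)$.

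The second step is to evaluate the inputs at $\eta$. Writing $\zeta=\eta(\gamma)$, which is a primitive $p^n$-th root of unity, the standard formulas $\ord_p(\zeta^{p^k}-1)=1/\varphi(p^{n-k})$ for $0\le k<n$ give $\ord_p(\Phi_j(\zeta-1))=1/p^{n-j}$ for $1\le j\le n-1$, while $\Phi_n(\zeta-1)=0$. Combined with $\ord_p(a_p)=1$ (since $a_p=\pm p$), these valuations plug directly into the recurrence.

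The combinatorial heart of the argument is to identify the monomial of minimum $p$-adic valuation in the expansion of $H_{\bullet,n}(\eta)$. Unrolling the recurrence writes $H_{\bullet,n}(\eta)$ as a sum over tilings of $[1,n]$ by monomers (each contributing $a_p$, of valuation $1$) and dimers (a dimer at top index $k$ contributes $\Phi_{k-1}(\zeta-1)$, of valuation $1/p^{n-k+1}$); the choice of $\bullet$ controls which tilings actually appear, via the initial condition. The claimed geometric sums $\sum 1/p^{2i-1}$ and $\sum 1/p^{2i}$ correspond to specific tilings that place their dimers at every other position, starting from an index whose parity depends jointly on $\bullet$ and on $n\bmod 2$. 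A parity-based analysis shows that this distinguished tiling achieves the unique smallest valuation among all contributing tilings, so the ultrametric inequality becomes an equality and yields the stated formula.

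The main obstacle is establishing uniqueness of the minimum, since tilings with the maximum number of dimers come in several flavors with comparable valuations. The delicate step is to confirm, by a case split on the parity of $n$, that the ``correctly shifted'' tiling is strictly smaller than all its competitors (and cannot be cancelled by other summands). The base cases $n=1,2$ follow directly from the closed forms $H_{\sharp,1}=a_p$, $H_{\sharp,2}=a_p^2+\Phi_1$, $H_{\flat,1}=1$, $H_{\flat,2}=a_p$, and the inductive step using the recurrence together with the minimum-selection lemma completes the proof.
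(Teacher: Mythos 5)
Your overall strategy is sound and it is genuinely different from what the paper does: the paper gives no proof, merely citing Sprung (Lemma~4.5 of \cite{sprung13}) and Lei--Loeffler--Zerbes (Proposition~4.6 of \cite{llz1}), whereas you attempt a self-contained computation. Your recurrence and initial data are correct: from $C_nM_{n-1}$ with $M_n=C_n\cdots C_1$ one indeed gets $H_{\bullet,n}=a_pH_{\bullet,n-1}+\Phi_{n-1}(\gamma)H_{\bullet,n-2}$, with $(H_{\sharp,0},H_{\sharp,1})=(1,a_p)$ and $(H_{\flat,0},H_{\flat,1})=(0,1)$, and the input valuations $\ord_p(a_p)=1$, $\ord_p\bigl(\Phi_j(\zeta-1)\bigr)=p^{-(n-j)}$ are right.

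However, the ``combinatorial heart'' is only sketched, and when you actually execute it you will find that the stated formulas are \emph{not} what the minimum-valuation tiling produces in the odd case. For $n$ odd, exactly one monomer is forced; the two extremal placements contribute
\[
a_p\,\Phi_1\Phi_3\cdots\Phi_{n-2}\quad\text{with }\ord_p=1+\sum_{i=1}^{(n-1)/2}\frac{1}{p^{2i}},
\qquad
a_p\,\Phi_2\Phi_4\cdots\Phi_{n-1}\quad\text{with }\ord_p=1+\sum_{i=1}^{(n-1)/2}\frac{1}{p^{2i-1}},
\]
and since $1/p^{2i}<1/p^{2i-1}$ the first is strictly smaller; one checks it beats every other tiling. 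Concretely, for $n=3$ one has $H_{\sharp,3}(\eta)=a_p^3+a_p\Phi_1(\zeta-1)+a_p\Phi_2(\zeta-1)$ with term valuations $3$, $1+p^{-2}$, $1+p^{-1}$, hence $\ord_p(H_{\sharp,3}(\eta))=1+p^{-2}$, not the stated $1+p^{-1}$; likewise $\ord_p(H_{\flat,3}(\eta))=\min(2,p^{-1})=p^{-1}$, not $p^{-2}$. The even case matches the Proposition. So your method, correctly carried out, does not reproduce the odd-$n$ formulas as printed: the exponents $2i-1$ and $2i$ appear to be interchanged between $H_\sharp$ and $H_\flat$ in the odd case, presumably a transcription slip against the conventions of the cited sources. (This does not affect the downstream use in Proposition~\ref{prop:nogrowth}, which only needs the $\frac{1-p^{1-s}}{p+1}$ shape of the difference.) Your write-up therefore needs to either (a) flag and correct the odd-case formulas, or (b) identify and adopt the precise matrix/labelling conventions of \cite{sprung13} and \cite{llz1} under which the stated formulas hold; as it stands, asserting that ``the inductive step completes the proof'' of the Proposition as printed is not justified.
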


We now combine all these calculations to study the $p$-adic valuations of the summands that appeared in \eqref{eq:formula}.
\begin{proposition}\label{prop:nogrowth}
Let $\theta$ be a character of $\Gamma$ of conductor $\p^{r+1}\q^{s+1}$. Let $\circ,\moon,\triangledown,\blacktriangledown\in\{\sharp,\flat\}$ with $(\circ,\moon)\ne(\triangledown,\blacktriangledown)$ and that at least one of $\det\uCol_{\circ\moon}(c_1\wedge c_2)(\theta)$ and $\det\uCol_{\triangledown,\blacktriangledown}(c_1\wedge c_2)(\theta)$ is non-zero. Then,
\[
\ord_p(H_{\p,\circ,r}H_{\q,\moon,s}\det\uCol_{\circ\moon}(c_1\wedge c_2)(\theta))\ne
\ord_p(H_{\p,\triangledown,r}H_{\q,\blacktriangledown,s}\uCol_{\triangledown,\blacktriangledown}(c_1\wedge c_2)(\theta))
\]whenever $r,s, |r-s|\gg0$.
\end{proposition}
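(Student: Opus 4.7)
The plan is to compute both $p$-adic valuations term by term, using Proposition~\ref{prop:valueH} for the two $H$-factors (after identifying the cyclotomic parts of $\theta$ at $\p$ and $\q$ via Remark~\ref{rk:local}) and Corollary~\ref{cor:valueC} for the $\det\uCol$-factor, and then to exploit a \emph{denominator incompatibility}: the fractions produced by Proposition~\ref{prop:valueH} are truncations of geometric series whose natural denominators are powers of $p$ together with $p+1$, whereas those produced by Corollary~\ref{cor:valueC} have $\phi(p^r)=p^{r-1}(p-1)$ in the denominator. Since $\gcd(p-1,p+1)\mid 2$ and $p+1>p-1$ for $p$ odd, these two families cannot conspire to cancel.

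First I would reduce to the case in which both $\det\uCol_{\circ\moon}(c_1\wedge c_2)(\theta)$ and $\det\uCol_{\triangledown\blacktriangledown}(c_1\wedge c_2)(\theta)$ are non-zero, since otherwise one of the two valuations is $+\infty$ and the other is finite and the conclusion is immediate. Writing $V$ and $V'$ for the two valuations and $\Delta:=V-V'$, the expansions give
\[
\Delta=\lambda_{r,s}+\frac{B}{\phi(p^r)}+\frac{C}{\phi(p^s)},
\]
where $B,C\in\ZZ$ come from Corollary~\ref{cor:valueC} and $\lambda_{r,s}$ is a $\ZZ$-linear combination of the finite geometric sums $\sum 1/p^{2i-1}$, $\sum 1/p^{2i}$ appearing in Proposition~\ref{prop:valueH} plus an integer $a-a'$. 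A closed-form evaluation shows that within each fixed parity class of $(r,s)$ the limit $\lambda_\infty:=\lim_{r,s\to\infty}\lambda_{r,s}$ lies in $\ZZ+\ZZ\cdot\tfrac{1}{p+1}$.

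Second I would split into finitely many subcases according to whether $\circ=\triangledown$, $\moon=\blacktriangledown$, and the parities of $r,s$. In every subcase except the \emph{balanced} one $(\circ,\moon)=(\sharp,\flat)$, $(\triangledown,\blacktriangledown)=(\flat,\sharp)$ (and its mirror), the $\tfrac{1}{p+1}$-part of $\lambda_\infty$ is non-zero; since $p+1$ is coprime to both $p$ and $p-1$, this non-zero fractional part cannot be cancelled by either the tail $\lambda_{r,s}-\lambda_\infty=O(1/p^{\min(r,s)-1})$ or by the Weierstrass correction $B/\phi(p^r)+C/\phi(p^s)$, so $\Delta\neq 0$ for $r,s\gg 0$.

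The main obstacle is the balanced subcase, where the $\p$- and $\q$-side $\tfrac{1}{p+1}$-contributions cancel exactly and $\lambda_\infty\in\ZZ$; letting $r,s\to\infty$ then forces $\lambda_\infty=0$, which pins down $a-a'$ as a specific integer. Writing out the leading tails explicitly yields
\[
\Delta=\frac{1}{p^{\min(r,s)-1}}\left(\frac{\pm 1}{p+1}+\frac{k_2}{p-1}\right)+O\!\left(\frac{1}{p^{\max(r,s)-1}}\right)
\]
with $k_2\in\ZZ$, and the hypothesis $|r-s|\gg 0$ guarantees the remainder is strictly subdominant. The bracket vanishes only when $\pm(p-1)=-k_2(p+1)$ in $\ZZ$, which is impossible because the left-hand side is non-zero while the right-hand side is either zero or of absolute value at least $p+1>p-1$. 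Hence $\Delta\neq 0$ for $r,s,|r-s|\gg 0$ in all cases.
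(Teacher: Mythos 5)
Your proof follows essentially the same strategy as the paper: compute both valuations using Corollary~\ref{cor:valueC} for the $\det\uCol$-factors and Proposition~\ref{prop:valueH} for the $H$-factors, split into cases according to the symbols $(\circ,\moon)$, $(\triangledown,\blacktriangledown)$ and the parities of $r,s$, and then rule out cancellation by a denominator analysis separating the $1/(p+1)$-contribution of the $H$-terms from the $1/\phi(p^\ast)$-contribution of the Coleman determinants. Where you diverge, and in fact improve on the paper, is in the \emph{balanced} subcase $(\sharp,\flat)$ vs.\ $(\flat,\sharp)$: the paper dismisses that case by asserting that $\frac{p^{1-s}-p^{1-r}}{p+1}$ keeps a $p+1$ in its denominator, but when $r$ and $s$ share a parity (which is exactly the situation being treated there) one has $p+1\mid p^{|r-s|}-1$, so this term reduces to a pure $p$-power denominator and the paper's stated reason evaporates. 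Your subleading Laurent expansion in $1/p^{\min(r,s)-1}$, together with the observation that $\frac{\pm1}{p+1}+\frac{k_2}{p-1}$ cannot vanish because $|k_2(p+1)|$ is either $0$ or at least $p+1>p-1$, correctly closes that gap, and the hypothesis $|r-s|\gg0$ is exactly what makes the $O(1/p^{\max(r,s)-1})$ remainder negligible. One small slip worth noting: you write that $p+1$ is coprime to $p-1$, but in fact $\gcd(p+1,p-1)=2$ for $p$ odd. This is not load-bearing, since in the unbalanced cases all you actually use is that the limiting constant $\lambda_\infty$ has a non-integer part in $\ZZ\cdot\frac{1}{p+1}$ (namely $\frac{\pm1}{p+1}$ or $\frac{\pm2}{p+1}$, and $p+1\ge4$ divides neither) and hence $\lambda_\infty\ne0$, which forces $\Delta\ne0$ once $r,s$ are large; but the phrasing should be corrected.
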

\begin{proof}
If either $\det\uCol_{\circ\moon}(c_1\wedge c_2)(\theta)$ or $\det\uCol_{\triangledown,\blacktriangledown}(c_1\wedge c_2)(\theta)$  is zero, then the result is immediate. We may therefore assume that both values are non-zero.

Let us first consider the case where $\circ=\triangledown$ but $\moon\ne\blacktriangledown$. Let's say $\moon=\sharp$ and $\blacktriangledown=\flat$, $r>s$ and $s$ is odd. We would like to show that 
\[
\ord_p(H_{\q,\sharp,s}\det\uCol_{\circ,\sharp}(c_1\wedge c_2)(\theta))\ne
\ord_p(H_{\q,\flat,s}\det\uCol_{\circ,\flat}(c_1\wedge c_2)(\theta)).
\]
Corollary~\ref{cor:valueC} and Proposition~\ref{prop:valueH} tell us that
\begin{align*}
\ord_p(H_{\q,\sharp,s}\det\uCol_{\circ,\sharp}(c_1\wedge c_2)(\theta))&=1+\sum_{i=1}^{\frac{s-1}{2}}\frac{1}{p^{2i-1}}+a+\frac{b}{\phi(p^r)}+\frac{c}{\phi(p^s)},\\
\ord_p(H_{\q,\flat,s}\det\uCol_{\circ,\flat}(c_1\wedge c_2)(\theta))&=\sum_{i=1}^{\frac{s-1}{2}}\frac{1}{p^{2i}}+d+\frac{e}{\phi(p^r)}+\frac{f}{\phi(p^s)},
\end{align*}
for some integers  $a,b,c,d,e,f$ that are independent of $r$ and $s$ as long as $r,s,r-s\gg0$. The difference of these two quantities is given by
\[
\ord_p(a_p)+a-d+\frac{b-e}{\phi(p^r)}+\frac{c-f}{\phi(p^s)}+\frac{1-p^{1-s}}{p+1}.
\]
The appearance of $p+1$ in the denominator of the last term implies that this is non-zero. The cases $s$ is even or $s>r$ can be proved similarly. The same proof also works when $\moon=\blacktriangledown$ and  $\circ\ne\triangledown$.

It remains to consider the case where $\moon\ne\blacktriangledown$ and  $\circ\ne\triangledown$. Suppose that  $(\circ,\moon)=(\sharp,\flat)$, $(\triangledown,\blacktriangledown)=(\flat,\sharp)$ and that both $r$ and $s$ are even. Then, the difference
\[
\ord_p(H_{\p,\sharp,r}H_{\q,\flat,s}\det\uCol_{\sharp,\flat}(c_1\wedge c_2)(\theta))-
\ord_p(H_{\p,\flat,r}H_{\q,\sharp,s}\det\uCol_{\flat,\sharp}(c_1\wedge c_2)(\theta))
\]
is of the form
\[
*+\frac{*}{\phi(p^r)}+\frac{*}{\phi(p^s)}+\frac{p^{1-s}-p^{1-r}}{p+1},
\]
where the $*$'s represent some integers. Once again, this cannot be zero because of the presence of $p+1$ in the denominator of the last term as long as $r\ne s$. Similar calculations show that the same is true for the other combinations of parities of $r$ and $s$. The same proof also works when  $(\circ,\moon)=(\sharp,\sharp)$, $(\triangledown,\blacktriangledown)=(\flat,\flat)$.
\end{proof}

\begin{corollary}\label{cor:countchar}Suppose \textbf{(H.tor)} holds.
Let $n\ge1$ be an integer and write $\Xi_n$ for the set of characters $\theta$ on $\Gamma$ which factor through $\Gamma_{n}$ but not $\Gamma_{n-1}$ such that 
$$
\det\uCol_{r,s}(c_1\wedge c_2)(\theta)=0,
$$
(with the conductor of $\theta$ being $\p^{r+1}\q^{s+1}$). Then, the cardinality of $\Xi_n$ is bounded independently of $n$.
\end{corollary}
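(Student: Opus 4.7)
My plan is to reduce the condition $\det\uCol_{r,s}(c_1\wedge c_2)(\theta)=0$ to the vanishing of a single nonzero element of $\Lambda$ at $\theta$, via Proposition~\ref{prop:nogrowth}, and then apply Corollary~\ref{cor:valueC} to conclude that only boundedly many such $\theta$ can arise.

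By \textbf{(H.tor)} and Remark~\ref{rk:torsion}, I will fix a pair $(\circ_0,\moon_0)\in\{\sharp,\flat\}^2$ for which $F_0:=\det\Col_{\circ_0\moon_0}(c_1\wedge c_2)$ is a nonzero element of $\Lambda$. Given $\theta\in\Xi_n$ of conductor $\p^{r+1}\q^{s+1}$ satisfying $r,s,|r-s|\gg 0$, formula~\eqref{eq:formula} writes $\det\uCol_{r,s}(c_1\wedge c_2)(\theta)$ as a sum of four products. Proposition~\ref{prop:nogrowth} guarantees that any two nonzero summands have distinct $p$-adic valuations, so by the ultrametric inequality the full sum can vanish only if all four summands vanish. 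In particular $F_0(\theta)=0$; but Corollary~\ref{cor:valueC} computes $\ord_p F_0(\theta)=a+b_i/\phi(p^r)+c_i/\phi(p^s)$, a finite quantity, a contradiction. Hence $\theta\in\Xi_n$ forces $\min(r,s)\le n_0'$ or $|r-s|\le M$ for constants $n_0',M$ depending only on $F_0$.

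Since $\theta$ factors through $\Gamma_n$ but not $\Gamma_{n-1}$, we have $\max(r,s)=n$. The Galois orbits with $\min(r,s)\le n_0'$ and $\max(r,s)=n$ number $\sum_{k=0}^{n_0'}\phi(p^k)=O(p^{n_0'})$, independent of $n$, since each pair $(r,n)$ with $r\le n_0'$ contributes $\phi(p^r)$ orbits under the diagonal $\Zp^\times$-action on values, and symmetrically for $s\le n_0'$.

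The main obstacle I foresee is the near-diagonal range $|r-s|\le M$ with $r,s$ both close to $n$, where the naive orbit count $\sum_{k=0}^{M}\phi(p^{n-k})$ grows with $n$. To finish, I would exploit that at $r=s$ the only comparison failing in Proposition~\ref{prop:nogrowth} is the one between $(\sharp,\flat)$ and $(\flat,\sharp)$, so vanishing of the full sum forces the $(\sharp,\sharp)$ and $(\flat,\flat)$ summands to vanish individually; this cuts down to simultaneous vanishing of two distinct nonzero elements of $\Lambda$. A two-variable Weierstrass preparation along the diagonal $r=s$, analogous to Corollary~\ref{cor:valueC}, should then bound the common zero set at level $n$ by a constant, completing the proof.
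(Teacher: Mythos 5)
Your opening moves coincide with the paper's: you invoke formula~\eqref{eq:formula}, Remark~\ref{rk:torsion}, Corollary~\ref{cor:valueC}, Proposition~\ref{prop:nogrowth} and the ultrametric to conclude that $\det\uCol_{r,s}(c_1\wedge c_2)(\theta)\ne 0$ once $r$, $s$ and $|r-s|$ are all large, so that a $\theta\in\Xi_n$ forces $\min(r,s)\le n_0'$ or $|r-s|\le M$; and you handle the branch $\min(r,s)\le n_0'$ correctly by counting Galois orbits. You have also correctly put your finger on what the paper's terse conclusion (``the number of such $(r,s)$ is clearly bounded, hence we are done'') glosses over: bounding the number of $(r,s)$ pairs does not by itself bound the number of orbits of $\theta$, since for a single near-diagonal pair $(r,s)$ with $\max(r,s)=n$ there are $\phi(p^{\min(r,s)})$ orbits, and $\min(r,s)$ can be as large as $n-M$. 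Identifying this as the real obstruction is a genuine observation.

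However, the fix you sketch does not close the gap. First, the ultrametric inequality does not give you that ``the $(\sharp,\sharp)$ and $(\flat,\flat)$ summands vanish individually'': if the $(\sharp,\flat)$ and $(\flat,\sharp)$ summands share the minimal valuation, vanishing of the four-term sum only forces the other two summands to have \emph{strictly larger} valuation, not to be zero, so you cannot reduce to common zeros of $\det\Col_{\sharp\sharp}$ and $\det\Col_{\flat\flat}$ this way. Second, and more fundamentally, the valuation inputs you rely on are themselves unavailable in the near-diagonal regime: Corollary~\ref{cor:valueC} and Proposition~\ref{prop:nogrowth} each carry the hypothesis $|r-s|\gg 0$, because the one-variable Weierstrass preparation used in their proofs needs $c/\phi(p^s)<1/\phi(p^r)$ (or the symmetric inequality). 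Thus you cannot even assert which valuation comparisons ``fail'' at $r=s$ without first establishing a near-diagonal analogue of Corollary~\ref{cor:valueC} — precisely the ``two-variable Weierstrass preparation along the diagonal'' you invoke but do not prove. Until that missing ingredient is supplied, the near-diagonal count, and therefore the corollary, is not established by your argument.
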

\begin{proof}
Recall from \eqref{eq:formula} that
\[
\det\uCol_{r,s}(c_1\wedge c_2)(\theta)=\sum_{\circ,\moon\in\{\sharp,\flat\}}H_{\p,\circ,r}H_{\q,\moon,s}\det\uCol_{\circ\moon}(c_1\wedge c_2)(\theta).
\]
if $\theta$ is a character on $\Gamma$ of conductor $\p^{r+1}\q^{s+1}$. If $\theta$ factors through $\Gamma_{n}$ but not $\theta_{n-1}$, then either  $r$ or $s$ equals $n$. 

If $\Sel^{\circ\moon}(E/K_\infty)^\vee$ is $\Lambda$-torsion, then Remark~\ref{rk:torsion} says that $\det\uCol_{\circ\moon}(c_1\wedge c_2)\ne0$.  Therefore, \textbf{(H.tor)} ensures that when $r,s\gg0$, at least one of the four determinants $\det\uCol_{\circ\moon}(c_1\wedge c_2)(\theta)$ does not vanish. We may therefore apply Proposition~\ref{prop:nogrowth}, which tells us that for $r,s,|r-s|\gg0$, the non-zero summands of the four-term sum have distinct $p$-adic valuations. 

In short, there exists a fixed integer $n_0$ (independent of $n$), such that if $\det\uCol_{r,s}(c_1\wedge c_2)(\theta)=0$, then  $r,s,|r-s|\le n_0$. If either $r$ or $s$ equals $n$, then the number of such $(r,s)$ is  clearly bounded. Hence we are done.
\end{proof}

We can now give the following bound on $\rank_{\Zp}\Y'_n$.

\begin{proposition}
$\rank_{\Zp}\Y_n'=O(p^n)$.
\end{proposition}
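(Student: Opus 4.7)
The plan is to bound the successive differences $\rank_{\Zp}\Y_{n+1}'-\rank_{\Zp}\Y_n'$ using \eqref{eq:growthrank} and then sum telescopically. By Lemma~\ref{lem:countlem}, each summand on the right-hand side of \eqref{eq:growthrank} is either zero, or equal to $\dim_{\Qp}\Qp(\theta)$, or to $2\dim_{\Qp}\Qp(\theta)$. Since $\theta$ factors through $\Gamma_{n+1}$, we have $\dim_{\Qp}\Qp(\theta)\le \phi(p^{n+1})=(p-1)p^n$, so each nonzero summand contributes at most $2(p-1)p^n$.

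Next, Lemma~\ref{lem:countlem} tells us that such a summand is nonzero only when $\det\uCol_{r,s}(c_1\wedge c_2)$ vanishes at $\theta$, where $\p^{r+1}\q^{s+1}$ is the conductor of $\theta$. Applying Corollary~\ref{cor:countchar} (with $n+1$ in place of $n$), the number of conjugacy classes of characters $\theta$ factoring through $\Gamma_{n+1}$ but not through $\Gamma_n$ for which this determinant vanishes is bounded by some constant $C$ independent of $n$. Consequently,
\[
\rank_{\Zp}\Y_{n+1}'-\rank_{\Zp}\Y_n'\le 2C(p-1)p^n.
\]

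Finally, a telescoping sum yields
\[
\rank_{\Zp}\Y_n'\le \rank_{\Zp}\Y_0'+\sum_{k=0}^{n-1}2C(p-1)p^k=\rank_{\Zp}\Y_0'+2C(p^n-1)=O(p^n),
\]
which is the desired estimate. The substantive work has already been carried out in Corollary~\ref{cor:countchar}; the present proof is merely the accounting step that converts the bound on the jump at each level into a bound on the total rank.
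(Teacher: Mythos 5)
Your proof is correct and follows essentially the same route as the paper's: bound the jump $\rank_{\Zp}\Y_{n+1}'-\rank_{\Zp}\Y_n'$ via \eqref{eq:growthrank} and Lemma~\ref{lem:countlem}, invoke Corollary~\ref{cor:countchar} to bound the number of contributing characters by a constant $C$ independent of $n$, and telescope. The only cosmetic difference is the bookkeeping index (you bound $\rank_{\Zp}\Y_{n+1}'-\rank_{\Zp}\Y_n'$ by $2C\phi(p^{n+1})$ while the paper writes the equivalent bound one index lower); the estimate and the argument are the same.
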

\begin{proof}
Lemma~\ref{lem:countlem} tells us that
\[
\rank_{\Zp}\Y'_{n}-\rank_{\Zp}\Y'_{n-1}\le 2C_n\phi(p^{n}),
\]
where $C_n$ is the number of characters $\theta$ that factor through $\Gamma_{n}$ but not $\Gamma_{n-1}$ and that $\det \uCol_{r,s}(c_1\wedge c_2)(\theta)=0$. Corollary~\ref{cor:countchar} says that there exists a constant $C$ such that $C_n\le C$ for all $n$. On summing the above inequality over $n$, we have
\[
\rank_{\Zp}\Y'_{n}-\rank_{\Zp}\Y'_{1}\le \sum_{i=2}^nC\phi(p^{i})=O(p^n)
\]
as required.
\end{proof}


\begin{thebibliography}{PTW02}
\bibitem[Be95]{bertolini}M. Bertolini: \textit{Selmer groups and Heegner points in anticyclotomic $Z_p$-extensions}, Compositio Math., Volume 99 (1995) no. 2 , p. 153-182.
\bibitem[Be97]{Be} M. Bertolini:\textit{Growth of {M}ordell-{W}eil groups in anticyclotomic towers}, Arithmetic geometry ({C}ortona, 1994), Sympos. Math., XXXVII,23--44, Cambridge Univ. Press, Cambridge, 1997
\bibitem[BK90]{BK}S. Bloch, K. Kato:
\textit{$L$-functions and Tamagawa numbers of motives}. The Grothendieck Festschrift, Vol. I, 333--400, 
Progr. Math., \textbf{86}, Birkh\"auser Boston, Boston, MA, 1990. 
\bibitem[BL17]{BL2} K. B\"{u}y\"{u}kboduk, A. Lei: \textit{Iwasawa theory of elliptic modular forms over imaginary quadratic fields at non-ordinary primes}, (2017), preprint, arXiv:1605.05310v4.
\bibitem[CCSS17]{CCSS} F. Castella, M. \c{C}iperiani, C. Skinner, F. Sprung: \textit{On the Iwasawa main conjectures for modular forms at non-ordinary primes}, (2017), arXiv:1804.10993.
\bibitem[Co02]{Cornut}C. Cornut: \textit{ Mazur's conjecture on higher Heegner points}, Invent, math. 148 (2002), 495-523.
\bibitem[CM81]{CM} A. Cuoco,  P. Monsky: \textit{Class numbers in $\Zp^d$-extensions},  
Math. Ann. \textbf{255} (1981), no. 2, 235--258. 
\bibitem[Cu80]{cuoco} A. Cuoco : \textit{The growth of Iwasawa invariants in a family}, Compositio Math. \textbf{41} (1980), no. 3, 415--437. 
\bibitem[El87]{elkies} N. Elkies: \textit{The existence of infinitely many supersingular primes for every elliptic curve over $\Q$}, Inventiones Mathematicae \textbf{89} (1987), no. 3, 561--567.
\bibitem[Gr03]{greenberg} R. Greenberg: \textit{Galois theory for the Selmer group of an abelian variety},  Compositio Math. \textbf{136} (2003), no. 3, 255--297. 
\bibitem[Gr]{Gr}R. Greenberg: \textit{Introduction to the Iwasawa Theory for Elliptic Curves}, Park City Lecture Notes, 1999.
\bibitem[Ha00]{harris} M. Harris: \textit{Correction to: "$p$-adic representations arising from descent on abelian varieties"}, 
Compositio Math. \textbf{121} (2000), no. 1, 105--108. 
\bibitem[Ka04]{Ka} K. Kato: \textit{$p$-adic Hodge theory and values of zeta functions of modular forms}, Cohomologies $p$-adiques et applications arithm\'etiques. III. Ast\'erisque \textbf{295} (2004), ix, 117--290. 
\bibitem[Ko03]{kobayashi03}
Shin-ichi Kobayashi, \emph{Iwasawa theory for elliptic curves at supersingular
  primes}, Invent. Math. \textbf{152} (2003), no.~1, 1--36.
\bibitem[Ki14]{kimspaper} B.D. Kim: \textit{Signed-Selmer Groups over the $\Z^2_p$-extension of an Imaginary Quadratic Field}, Canad. J. Math.  \textbf{66} (4), 2014 pp. 826--843.
\bibitem[KO18]{KO} T. Kitajima, R. Otsuki: \textit{On the plus and minus Selmer groups for elliptic curves at supersingular primes},  Tokyo J. Math.,  \textbf{41} (2018), no. 1,  273--303.
\bibitem[Le14]{lei14} A. Lei, \textit{Factorisation of two-variable $p$-adic $L$-functions},
Canad. Math. Bull. \textbf{57} (2014), no. 4, 845--852. 
\bibitem[LLZ17]{llz1} A. Lei, D. Loeffler, S. L. Zerbes, \textit{On the asymptotic growth of Bloch-Kato-Shafarevich-Tate groups of modular forms over cyclotomic extensions}, Canad. J. Math. \textbf{69} (2017), no. 4, 826--850. 
\bibitem[LLZ10]{llz0} A. Lei, D. Loeffler, S. L. Zerbes: \textit{Wach modules and Iwasawa theory for modular forms}, Asian J. Math. 14 (2010), no. 4, 475--528. 
\bibitem[LP18]{lp} A. Lei, B. Palvannan: \textit{Codimension two cycles in Iwasawa theory and elliptic curves with supersingular reduction}, (2018), preprint, arXiv:1806.07214. 
\bibitem[LZ14]{LZ0} D. Loeffler, S. L. Zerbes: \textit{Iwasawa theory and $p$-adic $L$-functions over $\Zp^2$-extensions}, 
Int. J. Number Theory \textbf{10} (2014), no. 8, 2045--2095. 
\bibitem[LV]{longovigni}M. Longo, S. Vigni: \textit{Plus/minus Heegner points and Iwasawa theory of elliptic curves at supersingular primes}, to appear in Bollettino dell'Unione Matematica Italiana.
\bibitem[Ma83]{mazuricm} B. Mazur: \textit{Modular curves and arithmetic}, Proceedings of the 1983 International Congress of Mathematicians.
\bibitem[MR03]{MR} K. Rubin, B. Mazur: \textit{Studying the growth of Mordell-Weil}, Documenta 2003.
\bibitem[Ro84]{rohrlich} D. E. Rohrlich, \textit{On $L$-functions of elliptic curves and cyclotomic towers}, Invent. Math. 75:3 (1984), 409–423.
\bibitem[Sp12]{sprung12} F. Sprung: \textit{Iwasawa theory for elliptic curves at supersingular primes: a pair of main conjectures}, J. Number Theory \textbf{132} (2012), no. 7, 1483--1506. 
\bibitem[Sp13]{sprung13} F. Sprung: \textit{The \v Safarevi\v c-Tate group in cyclotomic $\Zp$-extensions at supersingular primes}, J. Reine Angew. Math. \textbf{681} (2013), 199--218. 
\bibitem[Sp16]{sprung16} F. Sprung: \textit{The Iwasawa Main Conjecture for elliptic curves at odd supersingular primes}, (2016), preprint, arXiv:1610.10017.
\bibitem[Va03]{Vatsal}V. Vatsal: \textit{Special values of anticyclotomic $L$-functions},  Duke Math. J. 116 (2003), no. 2, 219-261.


\end{thebibliography}

\end{document}